\documentclass[12pt]{amsart}
\usepackage{amsmath, amssymb, amsthm, latexsym}
\usepackage{spectralsequences}
\usepackage{mathrsfs}
\usepackage{amssymb}
\usepackage{latexsym}
\usepackage[center]{caption}
\usepackage[OT2,T1]{fontenc}
\DeclareSymbolFont{cyrletters}{OT2}{wncyr}{m}{n}
\DeclareMathSymbol{\Sha}{\mathalpha}{cyrletters}{"58}
\usepackage{tikz}
\newcounter{braid}
\newcounter{strands}
\pgfkeyssetvalue{/tikz/braid height}{1cm}
\pgfkeyssetvalue{/tikz/braid width}{1cm}
\pgfkeyssetvalue{/tikz/braid start}{(0,0)}
\pgfkeyssetvalue{/tikz/braid colour}{black}
\pgfkeys{/tikz/strands/.code={\setcounter{strands}{#1}}}

\makeatletter
\def\cross{%
  \@ifnextchar^{\message{Got sup}\cross@sup}{\cross@sub}}

\def\cross@sup^#1_#2{\render@cross{#2}{#1}}

\def\cross@sub_#1{\@ifnextchar^{\cross@@sub{#1}}{\render@cross{#1}{1}}}

\def\cross@@sub#1^#2{\render@cross{#1}{#2}}

\def\render@cross#1#2{
  \def\strand{#1}
  \def\crossing{#2}
  \pgfmathsetmacro{\cross@y}{-\value{braid}*\braid@h}
  \pgfmathtruncatemacro{\nextstrand}{#1+1}
  \foreach \thread in {1,...,\value{strands}}
  {
    \pgfmathsetmacro{\strand@x}{\thread * \braid@w}
    \ifnum\thread=\strand
    \pgfmathsetmacro{\over@x}{\strand * \braid@w + .5*(1 - \crossing) * \braid@w}
    \pgfmathsetmacro{\under@x}{\strand * \braid@w + .5*(1 + \crossing) * \braid@w}
    \draw[braid] \pgfkeysvalueof{/tikz/braid start} +(\under@x pt,\cross@y pt) to[out=-90,in=90] +(\over@x pt,\cross@y pt -\braid@h);
    \draw[braid] \pgfkeysvalueof{/tikz/braid start} +(\over@x pt,\cross@y pt) to[out=-90,in=90] +(\under@x pt,\cross@y pt -\braid@h);
    \else
    \ifnum\thread=\nextstrand
    \else
     \draw[braid] \pgfkeysvalueof{/tikz/braid start} ++(\strand@x pt,\cross@y pt) -- ++(0,-\braid@h);
    \fi
   \fi
  }
  \stepcounter{braid}
}

\tikzset{braid/.style={double=\pgfkeysvalueof{/tikz/braid colour},double distance=1pt,line width=2pt,white}}

\newcommand{\braid}[2][]{%
  \begingroup
  \pgfkeys{/tikz/strands=2}
  \tikzset{#1}
  \pgfkeysgetvalue{/tikz/braid width}{\braid@w}
  \pgfkeysgetvalue{/tikz/braid height}{\braid@h}
  \setcounter{braid}{0}
  \let\sigma=\cross
  #2
  \endgroup
}
\makeatother

\input xypic

\swapnumbers
\newtheorem{theorem}{Theorem}
\newtheorem{proposition}[subsection]{Proposition}
\newtheorem{lemma}[theorem]{Lemma}

\makeatletter
\makeatother

\def\Z{\mathbb{Z}}

\def\R{\mathbb{R}}

\def\F{\mathbb{F}}

\def\Zpk{\mathbb{Z}/p^{k}}
\def\Zpk1{\mathbb{Z}/p^{k-1}}

\newcommand{\rref}[1]{(\ref{#1})}

\newcommand{\beg}[2]{\begin{equation}\label{#1}#2\end{equation}}
\def\r{\rightarrow}

\def\F{\mathbb{F}}

\def\sl2{\widetilde{SL_{2}(\Z)}}

\title[Equivariant operations in $THH$]{Equivariant operations in topological Hochschild homology}
\author{Po Hu, Igor Kriz, Petr Somberg and Foling Zou}
\thanks{The authors acknowledge support by grant GA\,CR 19-28628X.
Hu acknowledges the support of NSF grant DMS-2301520
Kriz also acknowledges the support of a Simons Foundation award 958219.}

\begin{document}
\maketitle

\begin{abstract}
We observe a new equivariant relationship between topological Hochschild homology and cohomology. We also 
calculate the topological Hochschild homology of the topological Hoch-schild cohomology of a finite
prime field, which can be viewed as a certain ring of structured operations in this case.
\end{abstract}

\section{Introduction}

Topological Hochschild homology, with its structure of a genuine $S^1$-equivariant spectrum, 
is a remarkably strong tool, which has surprising applications. Examples include the 
construction of topological cyclic cohomology $TC$, 
\cite{tc,hm,hm1,hm2,tc1}, which is an effective tool for computing algebraic K-theory of complete rings \cite{dm}. 
Constructed in the process, another form of topological cyclic cohomology, $TR$, was
found, which is interesting because of its close relationship with the De Rham-Witt complex (see e.g. \cite{hm1}).
It is also implicit
in the discussion of Bhatt, Morrow, and Scholze, \cite{bms,bms1} (see also \cite{ns,hksthh}) of one approach to prismatic cohomology, unifying several known cohomology theories in $p$-adic Hodge theory. After the present paper was submitted for publication,
$TR$ also occurred in other follow-up work, e.g. \cite{and1,and2}.

There is a remarkable asymmetry between topological Hochschild homology $THH(R)$
and topological Hochschild cohomology $THC(R)$, which is known to be an algebra
over the little $2$-cube operad (a structure originally conjectured by Deligne), but there is
no known counterpart of a genuine $S^1$-equivariant structure, which seems odd. 

\vspace{3mm}

There is an old suggestion that the topological Hochschild homology $THH(R)$ and topological Hochschild cohomology
$THC(R)$ for an associative $S$-algebra $R$ should somehow be dual (mentioned e.g.
in Schechtman's ICM talk \cite{schecht}). This stems 
from the idea of Koszul duality for operads \cite{gk}
which, even though its statements do not apply here literally (due to the fact that 
associative $S$-algebras do not form a based category), should still have some manifestation.

Yet the behaviors 
of the constructions of topological Hochschild homology and cohomology are quite different. As already remarked,
$THH(R)$ forms
a genuine $S^1$-spectrum.
This was first described by B\"{o}kstedt-Hsiang-Madsen \cite{tc} and used by Hesselholt-Madsen
\cite{hm,hm1,hm2}. 
In \cite{tc1} (see also \cite{bore}), a different approach using the multiplicative norm was given,
in terms of orthogonal spectra. In fact, the multiplicative norm can be defined
on the level of spaces, thus giving $THH(R)$ the structure of a genuine $S^1$-spectrum in the
formalism of $S$-modules as well. 

On the other hand, $THC(R)$ has a structure of a $\mathcal{C}_2$-algebra in the category of $S$-modules. 
This is the spectral form of the 
Deligne conjecture (see McClure and Smith \cite{mcclures} and Lurie \cite{c2} from the point
of view of quasicategories; one can also
interpret spectrally the proof for chain complexes given by Hu and Kaufmann \cite{hucacti,kauf}). How are these two structures dual, or, 
more broadly, what do they have in common? 

\vspace{3mm}

It is of course not uncommon in algebraic topology for one of the partners of homology and
cohomology to have more structure. For example, the ring-valued cohomology of a space is a ring,
while its homology is not. Still, when learning about the structures of algebraic
topology, we realize that homology of a space is a module over cohomology, a structure
which works well with duality, so the disparity between the homology and cohomology of a 
space is largely resolved. From this point of view, the asymmetry between $THH(R)$ and
$THC(R)$ for ring (or $A_\infty$ ring spectrum $R$) seems much more profound, with little 
work in this direction done up to this point.

The purpose of this paper is to give a step in this direction by considering topological Hochschild
homology of topological Hochschild cohomology:

\vspace{3mm}

\begin{theorem}\label{t1}
Let $R$ be an associative $S$-algebra.
There exists a natural $S^1$-equivariant twisted associative $S$-algebra
$THHC(R)_{S^1}$
(indexed over the complete
universe) equivalent to $THH(THC(R))$
over which there exists an $S^1$-equivariant twisted left module equivalent to $THH(R)$.
\end{theorem}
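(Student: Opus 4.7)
The plan is to construct $THHC(R)_{S^1}$ as the cyclic bar construction of $THC(R)$ promoted to a genuine $S^1$-spectrum via the multiplicative norm framework of \cite{tc1,bore}, and to build the module action on $THH(R)$ from the canonical evaluation $\mu\colon THC(R)\wedge R\to R$ that makes $R$ into a bimodule over $THC(R)$.

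Writing $THC(R)=F_{R\wedge R^{op}}(R,R)$, the map $\mu$ gives $R$ the structure of a $THC(R)$-$R$-bimodule in which the $THC(R)$-action is centralized by the $R$-action in the appropriate homotopy sense. Levelwise in the cyclic bar complex I pair $THC(R)^{\wedge(n+1)}$ with $R^{\wedge(n+1)}$ by the diagonal action: each $THC(R)$-factor acts on the corresponding $R$-factor via $\mu$. Face maps multiply adjacent factors on both sides, and compatibility of the action with them is precisely the bimodule property of $\mu$. The cyclic operator permutes both tensor powers in parallel, making the action simplicial-cyclic-equivariant.

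To promote this to a genuine $S^1$-equivariant statement indexed over the complete universe, I apply the multiplicative norm construction in parallel to both simplicial objects. Because the multiplicative norm respects smash products up to coherent equivalence, the levelwise action norms to an action of the resulting genuine $S^1$-spectra, and the geometric realization of this simplicial action is $THHC(R)_{S^1}$, equivalent to $THH(THC(R))$ by construction. The word ``twisted'' reflects that the $S^1$-compatibility here arises from the parallel cyclic rotation of both algebra and module sides, rather than from the naive smash-product $S^1$-action.

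The main obstacle is the coherence of this action under the multiplicative norm. For each finite subgroup $C_m\subset S^1$, one must verify that the $C_m$-equivariant norm of the diagonal levelwise action produces a compatible action of the resulting $C_m$-equivariant spectra, in a way compatible with restrictions between subgroups and with the cyclotomic structure maps. This coherence is what separates a genuine twisted module from a merely Borel-equivariant one, and is where the bulk of the technical work lies. Once it is in place, the equivalence of the underlying module with $THH(R)$ follows from the standard identification of the various models of $THH$ as genuine $S^1$-spectra.
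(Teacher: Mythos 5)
Your proposal correctly identifies some pieces of the setup—the evaluation pairing making $R$ a module over $THC(R)$, the multiplicative norm to promote to a genuine $S^1$-structure, and the cyclic bar construction underlying $THH$—but the central difficulty of the theorem is absent, and the levelwise construction you propose does not actually define a cyclic map.

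Concretely: in your diagonal levelwise pairing $THC(R)^{\wedge(n+1)}\wedge R^{\wedge(n+1)}\to R^{\wedge(n+1)}$, compatibility with the face map $d_i$ requires the identity $(a_i a_{i+1})\cdot(r_ir_{i+1})=(a_i\cdot r_i)(a_{i+1}\cdot r_{i+1})$, which in turn forces $r_i$ and $a_{i+1}$ (i.e., the image of $a_{i+1}$ under evaluation at $1$) to commute in $R$. That commutativity is only available \emph{up to coherent homotopy}: $THC(R)$ is a derived center, an $E_2$-ring, and the Gerstenhaber bracket obstructs strict centrality. Rigidifying this homotopy-coherent centrality into a well-defined cyclic object is the actual content of the proof, and it is precisely what the paper achieves by encoding $THC(R)$ as an algebra over the unframed cactus operad and then rebuilding $THH$ with a geometric model $TJH$ whose input is a configuration of disjoint closed subintervals of $S^1$ (instead of points). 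The module action then arises by contracting intervals to points, and the associativity only holds after a base-point of $S^1$ is chosen—which is exactly the source of the twisting. Your proposal replaces this with the vague phrase ``centralized\ldots in the appropriate homotopy sense'' and never engages with the $E_2$/cactus structure at all; as stated, the levelwise action is not a simplicial map.

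Relatedly, your explanation of ``twisted'' (parallel cyclic rotation on both sides) is not what the paper means. The twisting is the dependence of the product on the $S^1$-action, and it is non-trivial precisely when the Gerstenhaber bracket on $THC(R)$ is non-trivial; the paper's Comment following Lemma~\ref{lmtilde} makes this explicit with the free-group example. Finally, the ``main obstacle'' you single out—coherence of the norm across finite subgroups $C_m\subset S^1$—is the more routine part of the argument (the paper handles it by checking that the restriction isomorphisms \eqref{eres100} hold and applying a right Kan extension); the genuine obstacle, the rigidification of the $E_2$-action via cacti and interval configurations, does not appear in your proposal.
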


\vspace{3mm}
The concept of a twisted associative algebra and module embodies a dependence of the product
on the $S^1$-action, which is defined below in Subsection \ref{sstwisted}. Roughly
speaking, this is necessary due to the Gerstenhaber Lie bracket on
$THC(R)$. We shall further discuss this in Section \ref{s2} below (see the Comment under Lemma 
\ref{lmtilde}).
Formulating the requisite constructions requires solving numerous technical problems,
and none of the formalisms available in the literature seemed to fit our purposes completely.
For this reason, we actually develop new (or modified) approaches to parts of the $THH$ story 
from scratch, using $S$-modules \cite{ekmm}.

\vspace{3mm}

It is worth noting that considering topological Hochschild homology of $THC(R)$ is, in many ways, a novel direction. While $THH$ can be applied to any $A_\infty$ (i.e. coherently associative) ring, $THC(R)$ is not a type of ring one would usually think of in this context. It is typically highly non-connective (as we shall see), which makes, for example, many of the methods of Nikolaus and Scholze \cite{ns} not applicable. Because of this, it seemed to make
sense to do a calculation at least in one basic case. This is provided by the following result
(where for a $G$-equivariant spectrum $E$, with $G$ a finite group, $E^G$ denotes its $G$-fixed points, and for
a non-equivariant spectrum $X$, $X_*$ denotes its coefficients):

\vspace{3mm}

\begin{theorem}\label{t2}
We have
\beg{et21}{(THH(THC(H\F_p)))^{\Z/p^{r-1}}_*=F(H\Z/p^r,H\Z/p^r)_*[y]\otimes\Gamma_{\Z/p^r}(\rho)
}
where $\rho$ is in homological degree $-2$, $y$ is in homological degree $2$, and $\Gamma$ denotes
the divided power algebra. Additionally,
\beg{et22}{(TR(THC(H\F_p)))_*=(F(H\Z,H\Z)_*)^\wedge_p
\otimes\Gamma_{\Z}(\rho)\otimes \Lambda_\Z(q)
}
where $TR$ is the homotopy limit of $THH^{\Z/p^{r-1}}$ with respect to the map $R$ of \cite{hm} and
$q$ has homological degree $-1$.
\end{theorem}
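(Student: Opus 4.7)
The strategy rests on combining Theorem \ref{t1} with the classical calculation of the $S^1$-equivariant homotopy of $THH(H\F_p)$. By B\"okstedt and Hesselholt--Madsen \cite{hm}, the $\Z/p^{r-1}$-fixed points refine to an equivalence of spectra $(THH(H\F_p))^{\Z/p^{r-1}}\simeq H\Z/p^r[y]$ with $|y|=2$ (using $W_r(\F_p)\cong\Z/p^r$). By Theorem \ref{t1}, the $\Z/p^{r-1}$-fixed points of $THH(THC(H\F_p))$ act as a twisted associative algebra on $H\Z/p^r[y]$. The plan is to exhibit each of the three tensor factors on the right-hand side of (\ref{et21}) as a distinct source of such operations, assemble them into a map to the left-hand side, and argue it is an isomorphism.

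In the construction, the factor $F(H\Z/p^r,H\Z/p^r)_*$ arises from ordinary cohomology operations acting on the coefficient Eilenberg--MacLane factor $H\Z/p^r$ and extending to $H\Z/p^r[y]$; the polynomial generator $y$ of the middle factor acts by multiplication by $y$; and the divided power class $\rho$ in homological degree $-2$ is a homotopy-theoretic dual to $y$ dictated by the twisting of Theorem \ref{t1}, with its divided-power structure reflecting the cyclic (rather than strictly commutative) nature of the twisted multiplication, analogous to the appearance of divided powers on the $E^2$-page of the B\"okstedt spectral sequence of a polynomial algebra. Combined, these produce a multiplicative map from the right-hand side of (\ref{et21}) to $\pi_*(THH(THC(H\F_p)))^{\Z/p^{r-1}}$ compatible with the twisted action on $H\Z/p^r[y]$. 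To verify it is an isomorphism, I would run the B\"okstedt-type spectral sequence for $THH(THC(H\F_p))$: its $E^2$-term can be computed from $\pi_* THC(H\F_p)$, and after passing to $\Z/p^{r-1}$-fixed points the generators all sit in even total degree, forcing collapse and matching the stated form up to multiplicative extensions which are pinned down by the action on $H\Z/p^r[y]$.

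For (\ref{et22}), pass to the inverse limit over $r$ under the restriction map $R$ of \cite{hm}. The inverse system $\{F(H\Z/p^r,H\Z/p^r)_*\}_r$ has limit $(F(H\Z,H\Z)_*)^\wedge_p$, and the divided power factors assemble into $\Gamma_\Z(\rho)$. The new exterior generator $q$ in homological degree $-1$ arises from a $\lim^1$ contribution -- equivalently a Bockstein-type extension associated with the short exact sequence $0\to\Z_p\xrightarrow{p}\Z_p\to\Z/p^r\to 0$ -- reflecting that the polynomial generator $y$ does not survive directly in the limit but produces, through $\lim^1$, an exterior class shifted in degree.

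The main obstacle will be making the twisted action of Theorem \ref{t1} sufficiently explicit on $\Z/p^{r-1}$-fixed points to fully identify the multiplicative and divided-power structure on $(THH(THC(H\F_p)))^{\Z/p^{r-1}}$, not only its underlying abelian group. A secondary difficulty is controlling the restriction $R$ precisely enough on the polynomial generator $y_r$ to produce exactly the exterior class $q$ in the $TR$-version without introducing spurious contributions from the $\lim^1$-term.
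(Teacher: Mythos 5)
Your plan diverges from the paper's argument in several places, and at least two of those divergences are genuine gaps rather than alternative routes.

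First, the claim that, after passing to $\Z/p^{r-1}$-fixed points, ``the generators all sit in even total degree, forcing collapse'' cannot be correct. The paper's non-equivariant answer is $THH(THC(H\F_p))_*=\F_p[\sigma]\otimes\Gamma_{\F_p}(\rho)\otimes A^*$, where $A^*=F(H\Z/p,H\Z/p)_*$ is the full Steenrod algebra, which has many odd-degree elements. The equivariant spectral sequence the paper actually runs (the $\Z/p^{r-1}$-Borel homology spectral sequence for $THH(THC(H\F_p))$) has nontrivial $d^2$-differentials given by $d^2(e_i)=Q_0^Re_{i-2}\overline\sigma$ and $d^2(e_i)=Q_0^Le_{i-2}\sigma$, and it is precisely the exactness of the Bocksteins $Q_0^{L},Q_0^{R}$ on $A^*$, together with the Hesselholt--Madsen differential $d^{2r-1}u=x^r\sigma^{r-1}$ on the surviving piece, that produces the answer. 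There is no parity-driven collapse. Moreover, you do not obtain the intermediate non-equivariant computation: the paper gets it via the bar construction in $H\F_p$-modules, a filtration by the augmentation ideal, Koszul duality, and a detection argument through the map $THH(THC(H\F_p))\r F(THH(H\F_p),THH(H\F_p))$ furnished by Theorem \ref{t1}. Your proposal uses Theorem \ref{t1} only for the existence of a twisted module action on $H\Z/p^r[y]$, which is not enough to pin down the three tensor factors; in particular, the appearance of $F(H\Z/p^r,H\Z/p^r)_*$ rather than just a $\Z/p^r$ coefficient ring comes from the $A^*$-factor in the non-equivariant answer together with the behavior of $Q_0^L,Q_0^R$ across fixed-point levels, not directly from ``cohomology operations acting on the coefficient Eilenberg--MacLane factor.''

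Second, your explanation of the exterior class $q$ in \rref{et22} is opposite to the paper's. You attribute $q$ to a $\lim^1$-contribution, but the paper explicitly invokes the Mittag--Leffler condition to show $\lim^1=0$. The class $q$ instead comes from the wedge-summand splitting
\[
\operatornamewithlimits{holim}_k F(H\Z/p^k,H\Z/p^k)\;\simeq\;F(H\Z,H\Z)^\wedge_p\vee\Sigma^{-1}F(H\Z,H\Z)^\wedge_p,
\]
i.e. from a structural feature of the inverse system of integral Steenrod operations (the compatible $H\Z\xrightarrow{p^k}H\Z\r H\Z/p^k$ splittings), not from an inverse-limit obstruction. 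You are right that the polynomial factor $[y]$ disappears in the limit because $R(y)=py$, but the $\Sigma^{-1}$ piece that yields $q$ would exist even if one worked one fixed-point level at a time; it is not produced by the passage to the homotopy limit.

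In short, the overall shape of your strategy (compute non-equivariantly, pass to fixed points by Borel-homology plus the cyclotomic cofibration, then take $\operatornamewithlimits{holim}$ over $R$) matches the paper's, but the two load-bearing claims — the parity collapse and the $\lim^1$-origin of $q$ — are both false, and the non-equivariant computation that the whole argument rests on is missing.
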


\vspace{3mm}

Theorem \ref{t2} can also be interpreted as a calculation of a type of 
structured $THH$ operations, in the basic case of the perfect field $\mathbb{F}_p$. 
The answer is remarkably small, reminding us of the result of Caruso 
\cite{caruso} on 
the lack of $\Z/p$-equivariant cohomology operations. That, of course, was later explained
in \cite{hk,sw,hkszsteen}, 
where it was shown that to get all the expected operations, one needs
to consider a twist. This in fact suggests a connection between \cite{hkszsteen} and the present
paper, which is the question of the first k-invariant of $THH$, which will be pursued in 
subsequent work.

\vspace{3mm}

The present paper is organized as follows: Section \ref{sp} recalls some important preliminary
constructions, namely the multiplicative norm and the unframed cactus operad. In Section \ref{s1},
we reformulate the construction of $THH$ in a way which is compatible with the constructions
needed to prove Theorem \ref{t1}. Theorem \ref{t1} is proved in Section \ref{s2}.
Sections \ref{scal} and \ref{scalc1} serve to recall some preliminary material needed
in the proof of Theorem \ref{t2}. In Section \ref{scal}, we recall some facts about the
dual Steenrod algebra and about integral Steenrod operations. In Section \ref{scalc1}, 
we recall the calculation of the equivariant homotopy groups of
$THH(\F_p)$ from our present point of view. In Section \ref{str}, we prove Theorem \ref{t2}.

\vspace{5mm}

\noindent
{\bf Acknowledgement:} We are thankful to M.Mandell for comments.

\vspace{5mm}

\section{Preliminaries}\label{sp}

The main purpose of this section is to recall, and partially reformulate for the purposes of this
paper, two important preliminary constructions: the multiplicative norm and the unframed cactus operad.

\subsection{The multiplicative norm}

One of the subjects to address is the multiplicative norm of equivariant
$S$-modules. 
This was introduced by Hill, Hopkins, and Ravenel
in \cite{hhr} in the context of finite groups. (It had been previously introduced
by Hu \cite{hu} in the context of motivic
spectra, which was later studied in detail by Bachmann and Hoyois \cite{bh}.)
For the basics on equivariant spectra, we refer the reader to \cite{lms}.

Let $G$ be a compact Lie group. Suppose $H\subseteq G$ is a 
subgroup of finite index. Let $\mathcal{U}$ be a complete $H$-universe. Then we have a complete $G$-universe
$$\mathcal{V}=Ind_H^G(\mathcal{U}).$$
Enumerating, once and for all, the cosets in $G/H$, we obtain an inner product space isomorphism
\beg{eiso1}{\mathcal{V}\cong \bigoplus_{|G/H|}\mathcal{U}.}
(We use $g\in G/H$ to identify the $g$th copy of the $H$-universe $\mathcal{U}$ with a $gHg^{-1}$-universe.)
Now, for a $\mathcal{U}$-indexed Lewis-May $H$-spectrum $X$, \rref{eiso1} gives the external smash product
$$\underbrace{X\wedge\dots\wedge X}_{\text{$|G/H|$ copies}}$$
a structure of a $\mathcal{V}$-indexed $G$-spectrum. This construction becomes, in
an obvious way, a functor from $\mathcal{U}$-indexed $H$-spectra to $\mathcal{V}$-indexed 
$G$-spectra, which we denote by $N_H^G$.

Recall from \cite{ekmm} that
the construction extends to $\mathbb{L}$-spectra (and hence to $S$-modules),
and goes as follows: Recall that an
$\mathbb{L}$-spectrum is a spectrum $X$ with a map 
$$\mathcal{I}(\mathcal{U},\mathcal{U})\rtimes X\r X$$
satisfying the obvious associativity and unit properties. Now 
consider the coequalizer of 
\beg{eiso2}{\mathcal{I}(\mathcal{U}^n,\mathcal{V})\rtimes (\bigwedge_{|G/H|}\mathcal{I}(\mathcal{U},
\mathcal{U})\rtimes X)\rightrightarrows
\mathcal{I}(\mathcal{U}^n,\mathcal{V})\rtimes (\bigwedge_{|G/H|}X).}
The two arrows are defined by composing linear isometries, or by applying the action on $X$.
Now on \rref{eiso2}, both maps are actually morphisms of $G$-equivariant spectra indexed over $\mathcal{V}$,
if we use the $G$-action on $\mathcal{V}$, conjugation action on isometries and coset action on the 
smash components.

This defines a functor from $\mathcal{U}$-indexed $\mathbb{L}$-$H$-spectra to $\mathcal{V}$-indexed
$\mathbb{L}$-$G$-spectra, which further passes to $S$-modules. We denote all these functors by $N_H^G$.
By construction, we have a natural isomorphism
\beg{e1}{N_{\{e\}}^{\Z/n}(X)\wedge N_{\{e\}}^{\Z/n}(Y)\cong N_{\{e\}}^{\Z/n}(X\wedge Y)}
(where $\wedge$ denotes the symmetric monoidal smash product of $S$-modules),
satisfying the obvious associativity and commutativity properties.
The construction also preserves cell objects.

\vspace{3mm}

\subsection{The unframed cactus operad}

For our purposes, it is also appropriate to describe in detail the {\em unframed cactus operad} introduced
by Voronov \cite{voronov}. To start out, by a {\em cactus datum}, we shall mean a pair
$$(T,\mathcal{E})$$
where 
$$T=\{0=t_0<t_1<\dots<t_n=1\}$$
is a partition of the unit interval and $\mathcal{E}$ is an equivalence relation on $T$ such that

\vspace{2mm}
\begin{itemize}
\item
Every equivalence class of $\mathcal{E}$ has $>1$ element

\vspace{2mm}

\item
$0\sim 1$

\vspace{2mm}

\item
If $E_1=\{t_{i_1}<\dots <t_{i_k}\}$, $E_2=\{t_{j_1}<\dots<t_{j_\ell}\}$ are equivalence 
classes of $\mathcal{E}$, then one of the following occurs:

(a) There exist a $1\leq s<\ell$ such that
$$t_{j_s}<t_{i_1}<t_{i_k}<t_{j_{s+1}}$$
or

(b) There exist a $1\leq s<k$ such that
$$t_{i_s}<t_{j_1}<t_{j_\ell}<t_{i_{s+1}}.$$
or

(c) $t_{i_k}<t_{j_1}$

\noindent
or

(d) $t_{j_\ell}<t_{i_1}$.

\end{itemize}

\begin{figure}
\includegraphics{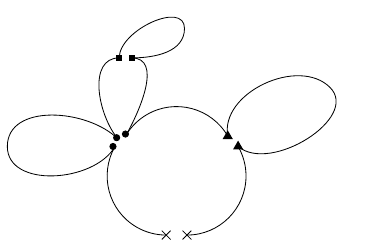}
\caption{An example of a cactus datum}
\end{figure}

\begin{figure}
\includegraphics{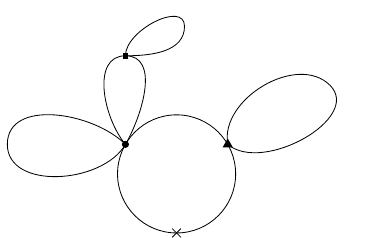}
\caption{The cactus graph corresponding to a cactus datum}
\end{figure}

\vspace{2mm}
The topology on the set of all cactus data is given by defining a sequence to converge
if it converges in the Hausdorff topology on the set
of equivalence classes of $\mathcal{E}$, and no two points in an equivalence class are identified in the limit.

\vspace{3mm}
The {\em cactus graph} $\Gamma(T,\mathcal{E})$ associated with a cactus datum $(T,\mathcal{E})$ is 
obtained by identifying the elements of $T$ which belong to the same equivalence class of $\mathcal{E}$.

A {\em cactus loop} of the graph $\Gamma(T,\mathcal{E})$ associated with the cactus datum $(T,\mathcal{E})$
is determined by choosing an $\mathcal{E}$-equivalence class
$$E=\{t_{i_1}<\dots<t_{i_k}\},$$
a number $1\leq s<k$, and taking the union of the images of all intervals $[t_j,t_{j+1}]$,
$i_s\leq j<i_{s+1}$
such that for all $i_s\leq p\leq j$, $j+1\leq q< i_{s+1}$,
$$t_p\nsim t_q.$$
One readily sees that a cactus loop is indeed a loop in the graph $\Gamma(T,\mathcal{E})$. Further,
form a $2$-dimensional CW-complex $\Gamma_2(T,\mathcal{E})$ by attaching a $2$-cell to each
cactus loop homeomorphically on the boundary. Further, choosing an orientation of the
$2$-cell so that its boundary intervals $[t_j,t_{j+1}]$
with increasing $j$ appear, say, in clockwise order, there is a unique (up to homeomorphism)
oriented embedding $\Gamma_2(T,\mathcal{E})\subset\R^2$.

Denote by $L(T,\mathcal{E})$ the set of loops of the cactus datum $(T,\mathcal{E})$.
One can check that the set of pairs
$$((T,\mathcal{E}),x),\;x\in L(T,\mathcal{E})$$
forms a covering space $\widetilde{X}$ over the space $X$ of all cactus data.

A {\em labelled cactus} consists of the data
$$(T,\mathcal{E}),\;
\diagram 
\sigma:L(T,\mathcal{E})\rto^\cong &\{1,\dots, N\}
\enddiagram
$$
where $(T,\mathcal{E})$ is a cactus datum. The {\em unframed cactus operad}
is the set of labelled cacti with topology induced from the covering space $\widetilde{X}$.

To define the operad structure, letting a loop $\ell\in L(T,\mathcal{E})$ consist of edges
\beg{eedges}{
[t_{j_1},t_{j_1+1}],\dots,[t_{j_m},t_{j_m+1}],
}
$j_1<j_2<\dots<j_m$, we have
\beg{eedges2}{
t_{j_s+1}\sim t_{j_{s+1}},
}
$s=1,\dots,m-1$. 

Identifying \rref{eedges2} in \rref{eedges}, we obtain an interval congruent (by an increasing map) to
$$J=[0,\displaystyle\sum_{s=1}^{m}(t_{j_s+1}-t_{j_s})]$$
where \rref{eedges2} goes to
$$\displaystyle
\sum_{p=1}^{s}(t_{j_p+1}-t_{j_p}).
$$
Let $h$ be the homothety mapping $J$ homeomorphically onto $[0,1]$.

Then a cactus datum 
$$(S,\mathcal{F})$$
is inserted into the loop $\ell$ by taking the partition
$$T\cup h^{-1}(S)$$
with the equivalence relation generated by $\mathcal{E}\cup h^{-1}(\mathcal{F})$. One checks
that, defining arity as the number of loops, this endows the space of all labelled cacti with the structure
of an operad $\mathcal{D}$.

It is a well-known fact, moreover, that
the resulting operad is equivalent to the little $2$-cube operad, even though a careful proof does not
seem to appear in the published literature. The proof consists of the following steps:

\noindent
{\bf Step 1:} The space of labeled cacti with $n$ loops is equivalent to the classifying space of
the pure braid group $\widetilde{B}_n$. To prove this, note that we have a fiber sequence
$$\bigvee_n S^1\r B\widetilde{B}_n\r B\widetilde{B}_{n-1}$$
where the projection is by dropping a braid. Accordingly, one sees using the standard method of
Dold-Thom \cite{dt} that the projection given by dropping the $n$-th loop from an arity $n$ labelled 
cactus to the space of arity $(n-1)$ labelled cacti is a quasifibration, and that the fiber has the
required homotopy type.

\vspace{2mm}

\noindent
{\bf Step 2:} Consider the ``corona cacti'' in 
$\mathcal{D}(n)$, i.e. labelled cacti which are bouquets of $n$ loops. These 
form a sub-operad. Similarly,
for the little $2$-cube operad $\mathcal{C}_2$, we have a suboperad $\mathcal{C}_1$ of little $1$-cubes.
Both of these suboperads are manifestly equivalent to the associative operad, whose $n$-th term is
the symmetric group $\Sigma_n$.
Choose base-points of $\mathcal{C}_2(n)$ and $\mathcal{D}(n)$ in the corresponding suboperad, corresponding to the
same permutation. Then denoting by $\widetilde{X}$ the universal cover
of a based space $X$, the sequence $\mathcal{P}$ of spaces $\widetilde{\mathcal{D}(n)}\times_{\widetilde{B}_n}
\widetilde{\mathcal{C}_2(n)}$ (formed from the product of universal covers by
factoring using the diagonal action of $\widetilde{B}_n$) forms an operad. (Caution: it is {\em not} true that the universal cover
of either $\mathcal{D}$ or $\mathcal{C}_2$ forms an operad: this would give a map of operads from
an $E_\infty$-operad to the little $2$-cube operad, which is easily seen not to exist by Dyer-Lashof operations.
However, our base point choice is consistent on both coordinates, and thus, taking the quotient with respect to 
the diagonal action of $\widetilde{B}_n$ on both universal covers gives a consistent formula for the multiplication
regardless of the choice of base point, since this is true in each coordinate.)

\vspace{2mm}

\noindent
{\bf Step 3:} We have a diagram of equivalences
$$\mathcal{D}\leftarrow \mathcal{P}\rightarrow\mathcal{C}_2.$$

\begin{figure}
\resizebox{0.9\hsize}{!}{
\includegraphics{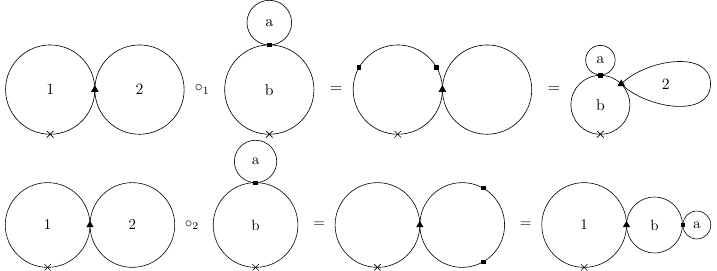}}
\caption{Cactus composition: Example 1}
\end{figure}

\begin{figure}
\resizebox{0.9\hsize}{!}{
\includegraphics{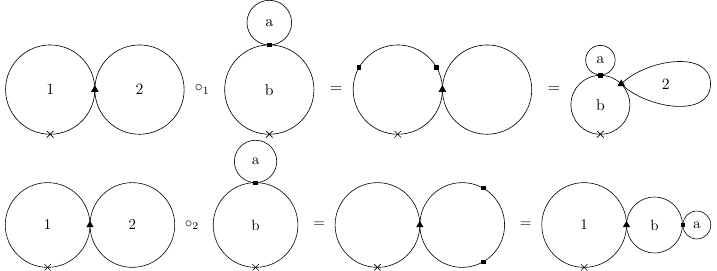}}
\caption{Cactus composition: Example 2}
\end{figure}

\subsection{Twisted equivariant algebras and modules}\label{sstwisted}

When discussing an $S^1$-equivariant associative $S$-algebra, we ordinarily mean that the
product is independent of the $S^1$-action. We shall need a more general concept,
where the product is allowed to change with the action. One possible way to formalize this
is as follows. We already encountered the associative operad $\mathcal{M}$ whose $n$-th
term is the symmetric group on $n$ elements:
$$\mathcal{M}(n)=\Sigma_n.$$
Now let $\widetilde{\mathcal{M}}$ be the free operad on the sequence
$$\Sigma_n\times S^1,$$
modulo the relation that for $x\in \Sigma_n$, $y_i\in \Sigma_{k_i}$, $i=1,\dots,n$,
$t\in S^1$, 
$$\gamma((x,t);(y_1,t),\dots,(y_n,t))\sim (\gamma(x;y_1,\dots,y_n),t),$$
$$(1,t)\sim (1,t^\prime), \; (0,t)\sim (0,t^\prime)$$
where $\gamma$ denotes operad composition and $1$ 
resp. $0$ is the single element of $\Sigma_1$ resp. $\Sigma_0$.

Then $\widetilde{\mathcal{M}}$ is an $S^1$-equivariant operad. A based $\widetilde{\mathcal{M}}$-algebra
will be called an $S^1$-equivariant {\em twisted associative algebra}.  Along with an $S^1$-equivariant twisted
associative algebra $A$,
we will also need a concept of an $S^1$-equivariant twisted left $A$-module. To this end, we recall that
operad modules over the associative operad model bimodules. The appropriate way of defining 
left and right modules over the associative operad and its variants is treated in \cite{drt}. Essentially, one fixes the
module coordinate as the first or the last one, thus restricting the permutations one is allowed to apply.
This treatment applies without change to the $S^1$-equivariant operad $\widetilde{M}$. 

\vspace{3mm}

\vspace{5mm}

\section{A description of $THH(R)$}\label{s1}

By a {\em cyclically ordered finite set}, we mean a finite set embedded to $S^1$. Two embeddings are considered
the same when they are related by an orientation-preserving diffeomorphism of $S^1$. A {\em cyclically ordered set}
is a set $Q$ each of whose finite subsets are cyclically ordered in a fashion compatible with inclusion.
A cyclic ordering on a set $Q$ is determined by a ternary relation of being ``in the anti-clockwise order."
A {\em morphisms of cyclically ordered sets} $f:Q\r Q^\prime$ is a map of sets where whenever $x,y,z$ are in the
anticlockwise order in $Q$, then either $f(x),f(y), f(z)$ are in the anticlockwise order, or $|\{f(x),f(y),f(z)\}|<3$.
The category of finite cyclically ordered sets will be denoted by $\Phi$.

Denoting by $Top$ the category of topological spaces, we have a canonical functor 
$$T:\Phi^{Op}\r Top$$
where $T(Q)$ is the space of morphisms of cyclic sets
$$Q\r S^1$$
(with the subspace topology of $(S^1)^Q$).

Let $R$ be a cell associative $S$-algebra. Then there is a natural functor
$TH^R$ from $\Phi$ into $S$-modules given by setting
$$TH^R(Q)=R^{\wedge Q}$$
where the action of surjective (resp. injective) morphisms of cyclically ordered finite sets is by multiplication in $R$
(resp. by insertion of units). Here $\wedge$ denotes the symmetric monoidal smash product of $S$-modules.

We consider the coend
$$THH(R)=T_+\wedge_{\Phi}TH^R(Q).$$
To give $THH(R)$ a genuine $\Z/n$-equivariant structure, we denote by $\Phi_n$ the category whose objects
are $\Z/n$-equivariant cyclically ordered sets $Q$ such that $\Z/n$ acts freely on $Q$ in a way which preserves
cyclic ordering and applying the $\Z/n$-action to any element 
$x\in Q$ defines a morphism of cyclically ordered sets
$$\Z/n\r Q.$$
Morphisms in $\Phi_n$ are morphisms of cyclically ordered sets which preserve the $\Z/n$-action.
We have, again, a canonical functor
$$T_n:\Phi^{Op}_n\r \Z/n\text{-}Top$$
where $T_n(Q)$ is the space of morphisms of $\Z/n$-equivariant cyclically ordered sets
$$Q\r S^1$$
(where we consider the standard $\Z/n$-action on $S^1$). Denoting for $Q\in Obj(\Phi_n)$ 
by $Q/(\Z/n)$ its set of orbits, we also have a functor from $\Phi_n$ to genuine $\Z/n$-equivariant
spectra (i.e. indexed by the complete universe)
$$TH_n^R(Q)=(N_{\{e\}}^{\Z/n}R)^{\wedge Q/(\Z/n)}.$$
(This uses \rref{e1}.) We set
$$THH(R)_{\Z/n}=(T_n)_+\wedge_{\Phi_n}TH_n^R.$$

By \rref{e1}, for $\Z/m\subset \Z/n$, we have a natural morphism 
$$res_{\Z/m}^{\Z/n} THH(R)_{\Z/n}\r THH(R)_{\Z/m}$$
which is an isomorphism. (This last property uses non-trivialy the unitality of $R$.) 

Thus, 
we are in the situation of a naive $S^1$-equivariant spectrum $E$
compatible with a system of genuine $\Z/n$-equivariant spectra $E_{\Z/n}$
for all natural numbers $n$, with isomorphisms
\beg{eres100}{res^{\Z/n}_{\Z/m}E_{\Z/n}\cong E_{\Z/m}.}
(We note that if we had a genuine $S^1$-equivariant spectrum, we woud obtain the data \rref{eres100}.)
This functor $res^{\Z/n}_{\Z/m}$ has both a left and a right adjoint, the left and right Kan extension,
respectively. (In fact, the Wirthm\"{u}ller isomorphism
states that these adjoints are naturally isomorphic in the derived category of equivariant spectra, see \cite{lms}, Section II.6.)
In the present setting, we apply the right adjoint to obtain a genuine $S^1$-equivariant spectrum $THH(R)_{S^1}$.
This functor can be loosely described as $F(E\mathcal{F}[S^1]_+,E)$ (where $\mathcal{F}[H]$ denotes
the family of subgroups not containing $H$). The reason why we prefer the right to the left adjoint
is that it preserves algebraic structure when present (for example $E_\infty$-structure for an $E_\infty$-ring spectrum
$R$).
Similar considerations in the settings of finite groups are discussed in \cite{hksf}.

The delicate point of this construction from the point
of view of isotropy separation is the piece $F(\widetilde{E\mathcal{F}[S^1]},?)$ (where
$\widetilde{X}$ denotes the unreduced suspension of $X$). It is worth noting
that this issue is present in some form in all known constructions of $THH$, and we do not know of
any setting where this particular piece is given any geometric interpretation in the case of $THH$.

\vspace{5mm}

\section{The action of $THH(THC(R))$}\label{s2}

In this section, we construct the $S^1$-equivariant $S$-algebra 
$$THHC(R)_{S^1}$$ 
of Theorem \ref{t1}, 
and show that it is equivalent to $THH(THC(R))$.
Now let $\Xi$ be the category whose objects are finite ordered sets and morphisms are non-decreasing 
maps. We have a functor
$$J:\Xi^{Op}\r Top$$
where $J(Q)$ is the set of non-decreasing maps
$$Q\r [0,1]$$
(with the subspace topology of $[0,1]^Q$). 

For an associative $S$-algebra $R$, we also have a functor $TJ^R$ from $\Xi$ to
$S$-modules given where
\beg{e3e1}{TJ^R(Q)=R^{\wedge Q}.}
(Again, surjective morphisms act by multiplication in $R$, while injective morphisms act by
insertion of units.)

We let
\beg{e3e0}{TI(R)=J_+\wedge_{\Xi}TJ^R.}
We note that $TI(R)$ is an $R^{Op}\wedge R$-module by multiplying at $0$ resp. $1$
from the left resp. right.

One notices that a pair of strictly monotone onto maps $[0,1]\r [0,s]$, $[0,1]\r [s,1]$
induces an isomorphism
\beg{e2}{TI(R)\wedge_R TI(R)\cong TI(R).
}
Now we may define
\beg{e3}{THC(R)=F_{R^{Op}\wedge R}(TI(R),R).
}
By \rref{e2}, we obtain an action of the unframed cactus operad (see Section \ref{sp} above) on $THC(R)$.
To explain
this action, we need to
explain how a cactus datum $(T,\mathcal{E})$ with $n$ loops maps from a smash of copies
of \rref{e3} for each loop to a single copy of \rref{e3}.

We invite the reader to look at Figure 1. We see that the cactus operad is generated by $\mathcal{D}(2)$,
so it suffices to describe the composition in the case of a cactus with two loops. (This is not really necessary,
but the general case is more complicated, so reducing to two loops may make the description easier to
understand.) 
In the case of two loops, the loops may either be attached both to the base point (call this Case 1) or one loop
could be attached to a non-boundary point of another (call this Case 2). 

In Case 1, the cactus datum gives a partition of the unit interval $[0,1]$ into two subintervals $[0,s]$ and
$[s,1]$, $0<s<1$. To describe a morphism
\beg{e3333}{TI(R)\wedge F_{R^{Op}\wedge R}(TI(R),R)\wedge F_{R^{Op}\wedge R}(TI(R),R)\r R,}
first break up $TI(R)$ according to our partition using \rref{e2}, then apply the evaluation map
on each factor paired with the corresponding copy of $F_{R^{Op}\wedge R}(TI(R),R)$,
then multiply.

In Case 2, the intervall $[0,1]$ is being partitioned into three subintervals $[0,s]$, $[s,t]$, $[t,1]$,
$0<s<t<1$. The interval $[s,t]$ corresponds to the loop being inserted, the remaining two intervals
correspond to the loop we are insterting into. To describe \rref{e3333} in this case, break up 
$TI(R)$ into three copies of $TI(R)$ according to our partition, using \rref{e2} twice. Apply evaluation of the middle copy
with respect to the loop we are inserting first, giving a copy of $R$. Using the inverse of \rref{e2} on the
intervals $[0,s]$, $[t,1]$ together with 
the $R^{Op}\wedge R$-module structure to multiply by the additional copy of $R$ in the middle, we get 
io a single copy of $TI(R)$. Pair it with the evaluation map of the loop being inserted into to get into $R$.

It is now clear that the general case can be described similarly, or, alternatively, one could approach this
by verifying standard relations on three loops. We skip this.

In particular,
the data giving \rref{e2} thereby induce a morphism
\beg{e4}{THC(R)\wedge THC(R)\r THC(R).}
We will now replace $THC(R)$ with its cell approximation in the category of $S$-algebras over the unframed
cactus operad. For simplicity, from now on, we suppress the approximation from the notation.

The map \rref{e4} is defined up to homotopy, but we can rigidify it by considering the operad $\mathcal{D}_1$
where $\mathcal{D}_1(n)$ consists of partitions of the interval $[0,1]$ into $n$ subintervals, which are labelled
by numbers $1,\dots,n$. Then \rref{e2} defines an action of the operad $\mathcal{D}_1$ on $THC(R)$.
The operad $\mathcal{D}_1$ is equivalent to the associative operad, so a $\mathcal{D}_1$-algebra
can be rectified into an associative algebra e.g. by bar construction of monads.

\vspace{3mm}
To consider $THH(THC(R))$, however, rectification is awkward, so it is convenient to describe
a variant of our construction for a $\mathcal{D}_1$-algebra $\mathcal{R}$.
Consider the space $TJH$ of finite sets $Q$ of closed subintervals of $S^1$, none of which
are a single point, with disjoint
interiors, whose union has a non-empty complement. 
Then $Q$ is a cyclically ordered set. Letting $\Phi^0$ be
the category of finite cyclically ordered sets and isomorphisms, then we may consider
the contravariant functor 
$$TJH^0:\Phi^0\r Top$$
given by
sending $Q$ to the space of isomorphims of cyclically ordered
sets from $Q$ to an element of $TJH$. Then we may define
the spectrum
$$TH\mathcal{H}^0(\mathcal{R})=(TJH^0)_+\wedge_{\Phi^0} TH^{\mathcal{R}}$$
where $TH^{\mathcal{R}}$ is defined by the same formula as in Section \ref{s1}.
Additionally, using \rref{e4}, we may construct from $TH\mathcal{H}^0(\mathcal{R})$ an $S$-module 
$TH\mathcal{H}(\mathcal{R})$
by imposing a colimit identification where configurations in $TJH^0$ which contain a pair of intervals
sharing a boundary point are identified with the configuration where the two intervals are merged.

More precisely, let $\Phi^1$ denote the set of cyclically ordered sets with a distinguished
element. $TJH^1$ denote the set of all isomorphism of cyclically ordered sets
$Q\in \Phi^1$ with an element of $TJH$
where the end point of the distinguished
interval $J$ is equal to the beginning point of the next interval $J^\prime$, counted counter-clockwise. 
In addition to the forgetful map $TJH^1\r TJH^0$, we then have another map
$\phi^1:TJH^1\r TJH^0$ given by replacing the intervals $J$, $J^\prime$ with their union. Then consider
\beg{eaux}{(TJH^1)_+\wedge_{\Phi^1}TH^{\mathcal{R}}}
Then there are two morphisms
from \rref{eaux} to $TH\mathcal{H}^0(\mathcal{R})$, one given by applying inclusion to the first coordinate, the other
by applying $\phi_1$ to the first coordinate and the map \rref{e4} to the second. We define
$TH\mathcal{H}(\mathcal{R})$ as the coequalizer of these two morphisms.

We shall specifically put
\beg{ethhhc}{
THHC(R)=TH\mathcal{H}(THC(R)).
}

\begin{lemma}\label{lmtilde}
$THHC(R)$ has a structure of an $S^1$-equivariant twisted associative algebra, and $THH(R)$ is its 
$S^1$-equivariant twisted left module.
\end{lemma}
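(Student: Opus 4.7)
The plan is to build the multiplication on $THHC(R)$ and the left action on $THH(R)$ from the unframed cactus operad action on $THC(R)$ recorded in \rref{e2} and \rref{e4}, with the $S^1$-parameter entering through the point at which two cactus loops are glued.

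For each $t \in S^1$, define a pairing $\mu_t : THHC(R) \wedge THHC(R) \to THHC(R)$ as follows. Given two classes represented by interval configurations $Q_1, Q_2 \in TJH$ with $THC(R)$-labels, regard $Q_1$ as lying on a copy $S^1_1$ and $Q_2$ on a copy $S^1_2$ of the circle, and glue $S^1_1$ and $S^1_2$ at $\{0, t\}$ to form a cactus datum with two loops, i.e.\ an element of $\mathcal{D}(2)$. The cactus operad action on $THC(R)$ from \rref{e4} then assembles the labels on each loop, and after rescaling the resulting cactus graph back to a single $S^1$, yields a configuration in $TH\mathcal{H}(THC(R))$. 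The $\Sigma_n$-permutations together with the family of parameters $t \in S^1$ realize the generating data $\Sigma_n \times S^1$ of the twisted operad $\widetilde{\mathcal{M}}$.

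The next step is to verify that $\mu_t$ descends through the coequalizer \rref{eaux} defining $TH\mathcal{H}$: the merging of boundary-adjacent intervals implemented by $\phi^1$ is precisely the degenerate case of cactus composition where two loops meet at two points, and is therefore compatible with the cactus operad action on $THC(R)$. One then establishes the twisted associativity by iterating $\mu_t$ at various parameters. The defining relations of $\widetilde{\mathcal{M}}$, namely
\[
\gamma((x,t);(y_1,t),\dots,(y_n,t)) \sim (\gamma(x;y_1,\dots,y_n),t),
\]
together with $(1,t)\sim(1,t')$ and $(0,t)\sim(0,t')$, correspond directly to the associativity and unitality of the cactus operad when all loops share the same rotation. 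The reason one obtains only a twisted structure, rather than a plain $S^1$-equivariant associative algebra, is that the cactus operad encodes the Gerstenhaber bracket on $THC(R)$: iterated products with genuinely different rotation parameters cannot in general be collapsed to a single rotation. The module structure on $THH(R)$ follows the same template with one factor replaced by $THH(R) = TH\mathcal{H}(R)$, using the bimodule structure of $R$ over $THC(R)$ implicit in the definition \rref{e3}: evaluating $TI(R)$-labels against $THC(R)$-labels produces $R$-labels, and the cactus action reassembles the output as a configuration in $TH\mathcal{H}(R)$.

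The main obstacle will be checking rigorously that the construction descends through the coequalizer defining $TH\mathcal{H}$ and that the parameterized multiplications satisfy exactly the $\widetilde{\mathcal{M}}$-relations rather than some a priori weaker coherence. This requires comparing the merging operations built into $TH\mathcal{H}$ with the cactus compositions at degenerate cacti, and tracking the rotation parameters carefully through iterated operad compositions.
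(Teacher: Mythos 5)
Your high-level idea is right — the multiplication and module action are parameterized by a point of $S^1$, and the twisting has its source in the Gerstenhaber structure on $THC(R)$ — but the geometric mechanism you propose is different from the one the paper uses, and it has a gap that the paper's mechanism is specifically designed to avoid. You glue $S^1_1$ and $S^1_2$ at a single point $t$, obtaining a two-loop cactus (a figure-eight), and then say you ``rescale the resulting cactus graph back to a single $S^1$.'' A figure-eight is not homeomorphic to a circle, so there is no such rescaling; what you can do is traverse the two loops in sequence to parameterize a circle of circumference $2$, but that traversal passes through the glue point twice, and if $t$ lands inside one of the intervals of $Q_2$ (or $0$ inside one of $Q_1$'s intervals), the traversal cuts a $THC(R)$-labeled interval in two. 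There is no comultiplication on $THC(R)$ with which to split that label, so $\mu_t$ is not defined at those values of $t$ and you do not get a map parameterized by all of $S^1$, which is what the $\widetilde{\mathcal{M}}$-structure requires. The sentence ``the cactus operad action on $THC(R)$ from \rref{e4} then assembles the labels on each loop'' is also not right as stated: the cactus composition gives $THC(R)^{\wedge n}\to THC(R)$ when labels sit on the {\em loops} of a cactus, but here the labels sit on intervals {\em inside} each loop, and there is no canonical way to collapse each loop's interval-configuration down to a single $THC(R)$-label.

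The paper instead takes the {\em acting} configuration of intervals and contracts each interval to a point; after contraction one still has a topological circle, and the identification of that circle with the standard $S^1$ by a similitude is where the base-point enters (a single base-point, varying over $S^1$, is the source of the twist). Because one contracts {\em before} re-identifying with $S^1$, the base-point may lie anywhere, including at a contracted interval, and no label is ever split; merging of intervals that come to share a boundary after contraction is handled precisely by \rref{e2}. You should also address unitality, which the paper treats separately: the cactus operad is not based, so one has to adjoin an ``empty cactus'' and further modify $THHC(R)$ to allow an interval carrying the unit to degenerate to a point and disappear; this is a nontrivial adjustment and your proposal is silent on it. Your observation that the failure of associativity across different rotation parameters reflects the Gerstenhaber bracket matches the paper's Comment following the lemma, but on its own it does not repair the definition of $\mu_t$.
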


\begin{proof}
Both the composition and the module action are defined by contracting each of the intervals occurring in the definition of $TJH^0$ to a point
in the $THHC(R)$ construction. 
This creates a model of $S^1$, which can be canonically identified with $S^1$ (by a similitude) provided
we are given a base-point. Further, the composition is associative when the base-points match. (Merging
intervals is handled by \rref{e2}.) 

The base-point
moves by the $S^1$-equivariant structure, which is where the twisting comes from. 

Unitality also must be treated carefully. (Again, the discussion is parallel for the algebra
and module structures.) The point is that the cactus operad, as defined, is not based. It can be given a based structure
by consider an ``empty cactus,'' i.e. a single point. Then $THC(R)$ has the structure of a unital algebra
where the unit is given by multiplication 
\beg{e3e10}{R\wedge\dots\wedge R\r R.}
To see this, recall the definition \rref{e3}. To produce the unit
$$S\r F_{R^{Op}\wedge R}(TI(R),R),$$
we need to, by adjunction, produce a morphism of $R$-bimodules
$$TI(R)\r R.$$
To this end, apply the projection $J_+\r S^0$ in \rref{e3e0}, and then
apply \rref{e3e10}, using \rref{e3e1}.

To have unitality, then, the definition of $THHC(R)$ needs to be further modified by allowing an 
empty configuration of intervals, and allowing an interval to which the unit is plugged in to degenerate
to length $0$ and disappear. Constructions of this type are standard (they were repeatedly
applied, for example, in \cite{ekmm}). We omit the details.
\end{proof}

\vspace{3mm}
\noindent
{\bf Comment:} One easily sees examples where, due to the scaling after omitting intervals, 
moving the base point can change the order of multiplication of loops. Moving loops past each other
is a cactus-level interpretation of the Gerstenhaber bracket. For this reason, the twisting is non-trivial
in examples where the Gerstenhaber product on $THC(R)$ is non-trivial. The simplest example is the 
suspension spectrum $R$ on the free group $F(a_1,\dots,a_n)$ on $n$ generators with a disjoint
base point, in which case (omitting suspension spectra notation), 
$THC(R)$ is the homotopy equalizer of the two maps
$$F(a_1,\dots,a_n)_+\rightrightarrows\prod_{i=1}^{n} F(a_1,\dots, a_n)_+\{da_i\}$$
where the two maps send a word $w$ to $wa_ida_i$, $a_iwda_i$, respectively.

\vspace{3mm}
To complete the picture, we must, therefore, 
compare the $THH$ and $TH\mathcal{H}$-constructions for associative algebras.

\vspace{3mm}
\begin{lemma}\label{lthhc}
Let $R$ be an associative $S$-algebra. Then we have an equivalence
$$THH(R)\sim TH\mathcal{H}(\mathcal{R}).$$
\end{lemma}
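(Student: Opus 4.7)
The approach is to recognize $THH(R)$ and $TH\mathcal{H}(R)$ as two models of the same cyclic bar construction on the associative $S$-algebra $R$. The construction $THH(R)$ uses a thin realization encoded by the full category $\Phi$ of cyclically ordered sets (where surjections in $\Phi$ act by multiplication and injections by insertion of units), while $TH\mathcal{H}(R)$ uses configurations of disjoint closed intervals in $S^1$ indexed over $\Phi^0$ (isomorphisms only), together with the explicit adjacency-merger coequalizer of \rref{eaux} that partially encodes the surjective-$\Phi$ action. The classical equivalence between thin and thick realizations of a cyclic bar construction, combined with a deformation retraction on parameter spaces, should supply the weak equivalence.

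Concretely, I would construct the comparison map $\pi : TH\mathcal{H}(R) \to THH(R)$ as follows. On $TH\mathcal{H}^0(R)$, send each closed subinterval in a configuration to its midpoint, obtaining a cyclically ordered finite subset of $S^1$ carrying the original $R$-labels. To verify that $\pi$ descends through the coequalizer of \rref{eaux}, I would introduce an auxiliary parameter space in which interval lengths are allowed to shrink continuously to zero: at length zero an interval degenerates to a point, and two adjacent intervals that both shrink to zero become a single coincident point whose two labels get multiplied via the corresponding surjection in $\Phi$. This auxiliary space maps to both $TH\mathcal{H}(R)$ and $THH(R)$, and provides the canonical homotopy matching the $\phi^1$-identification in $TH\mathcal{H}$ with the simplicial face map in $THH$.

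To show $\pi$ is a weak equivalence, I would filter both sides by the number of $R$-factors. The $n$-th filtration quotient of $TH\mathcal{H}(R)$ is built from configurations of $n$ disjoint intervals modulo adjacency-mergers, while that of $THH(R)$ is built from configurations of $n$ points on $S^1$ modulo the full $\Phi$-action. The cyclic-equivariant deformation retraction $TJH_n \to \mathrm{Conf}_n(S^1)$ given by midpoint contraction, combined with the auxiliary construction above, identifies the graded pieces. The extra $\Phi$-degeneracies (unit-insertions) on the $THH(R)$ side are absorbed into a cofibrant replacement of $R$ and contribute no new homotopy, by the standard thin-vs-thick realization comparison.

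The main obstacle is the second step: the naive midpoint contraction does not literally carry adjacent intervals to coincident points, so the interval-shrinking auxiliary space is essential to realize $\Phi$-compatibility on the nose. In particular, one must check that the shrinking homotopy matches precisely the surjection in $\Phi$ that acts on $TH^R$ by multiplication of the two affected $R$-labels, and that this matching is natural in $Q$ and $R$. Once this compatibility is in place, the filtration-stage equivalence is straightforward from known homotopy equivalences of interval and point configurations on $S^1$.
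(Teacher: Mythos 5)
Your broad strategy—compare the two constructions through an intermediate configuration space that allows degenerate intervals, then finish by a filtration on the number of $R$-factors—is the same as the paper's, which builds a zigzag $TH\mathcal{H}(R)\to TH\Gamma(R)\leftarrow THH(R)$ through a space $TKH$ of configurations of closed subintervals of $S^1$ in which single-point intervals (possibly coinciding) are permitted, and then argues inductively up the filtration via homotopy pushouts. However, there are two concrete gaps in the way you have set it up.

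First, the direct map $\pi\colon TH\mathcal{H}(R)\to THH(R)$ given by sending each interval to its midpoint does not exist. The passage from $TH\mathcal{H}^0(R)$ to $TH\mathcal{H}(R)$ in \rref{eaux} is a \emph{strict} coequalizer, so for $\pi$ to descend you need an on-the-nose identity: the image of a configuration containing adjacent intervals $[a,b]$, $[b,c]$ with labels $x,y$ must equal the image of the merged configuration with $[a,c]$ labeled $xy$. The first has the two distinct points $(a+b)/2$, $(b+c)/2$ on $S^1$, the second the single point $(a+c)/2$; in the coend $T_+\wedge_\Phi TH^R$ the only identification available multiplies labels when points of $S^1$ actually coincide, and these do not. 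A homotopy (your shrinking parameter) cannot remedy descent through a strict coequalizer. You half-acknowledge this (``does not literally carry adjacent intervals to coincident points''), but then take $\pi$ as constructed anyway.

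Second, the direction of your auxiliary span is backwards and does not produce the missing leg. You propose maps \emph{out} of the auxiliary space to both $TH\mathcal{H}(R)$ and $THH(R)$; but any space parametrizing interval configurations with a shrinking time $t\in[0,1]$ has the same problem at $t=1$: adjacent intervals shrunk to their midpoints remain at distinct points of $S^1$, so there is no continuous map to point configurations respecting the merging relation. What works, and is what the paper does, is to map \emph{into} the intermediate: $TH\mathcal{H}(R)$ includes into $TH\Gamma(R)$ by viewing nondegenerate interval configurations inside $TKH$, and $THH(R)$ includes by viewing a point configuration as a configuration of single-point intervals (with coincident single points handled by the merging relation on $TH\Gamma(R)$, matching the $\Phi$-coend). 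Both inclusions are then shown to be equivalences by the filtration on $|Q|$, where passing from level $n$ to $n+1$ is a homotopy pushout preserved by the comparison maps, rather than by a cyclic-equivariant deformation retraction of $TKH$ onto point configurations—which, by the same adjacency issue above, is not available.
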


\begin{proof}
We shall construct a spectrum $TH\Gamma(R)$ and equivalences
\beg{ethhccomp}{TH\mathcal{H}(R)\r TH\Gamma(R)\leftarrow THH(R).}
The idea is to replace $TJH$ with an analogue $TKH$ which allows the case where intervals
are a single point. However, one must address the issue that single points are allowed to map to the
same point while the non-degenerate intervals are not. One possible approach is to define $TKH$
as the set of maps of a cyclically ordered set to the space consisting of closed subintervals of $S^1$
(where single point intervals are allowed) where elements $a<b<c$ map into intervals $I,J,K$ which have
disjoint interiors (where the interior of a single point interval is defined to be empty), and
for all $x\in I$, $y\in J$, $z\in K$, we have $x\leq y\leq z$. As above, one can define
$$TH\Gamma^0(R)=(TKH^0)_+\wedge_{\Phi^0} TH^{R}$$
and then identify further through merging consecutive intervals in the cyclic order (containing intervals
consisting of a single point), using the multiplication $R\wedge R\r R$.

We then have obvious comparison maps \rref{ethhccomp}. To prove that the maps are equivalences,
introduce an increasing filtration by the number $n$ of elements of the set $Q$, and note that passing
from $n$ to $n+1$ consists of taking a homotopy pushout with the comparison maps preserving the pushout diagram,
while the morphisms on its corners are equivalences.
\end{proof}

\vspace{5mm}

To obtain a $\Z/n$-equivariant version, we consider similarly 
the space $TJH_n$ of collections in $TJH$ which are invariant under the standard $\Z/n$-action. We have
a category $\Phi^0_n$ with the objects $Obj(\Phi_n)$ and morphisms the isomorphisms in $\Phi_n$. We define
$TJH_n^0$ as the category of all isomorphisms from an object of $\Phi^0_n$ to an element of $TJH_n$.
We can then define
$$THHC^0(R)_{\Z/n}=(TJH_n)_+\wedge_{\Phi^0_n} TH^{THC(R)}_n.$$
Again, identifications can be imposed when a $\Z/n$-invariant $n$-tuple of pairs of intervals sharing a boundary
point is present.

More precisely, we define $\Phi^1_n$ as the set of pairs
$(Q,x)$ where $Q\in Obj(\Phi_n)=Obj(\Phi^0_n)$ and $x\in Q$.
Then let $TJH^1_n$ be the set of $\Phi^0_n$-morphisms from $Q$ with $(Q,x)\in \Phi^1_n$
to an element of $TJH_n$, where, again, 
the end point of the distinguished
interval $J$ is equal to the beginning point of the next interval $J^\prime$, counted counter-clockwise. 
In addition to the forgetful map $TJH^1_n\r TJH^0_n$, we then, again, have a map $\phi^1_n:
TJH^1_n\r TJH^0_n$ given by replacing $J$ and $J^\prime$ with their union, and similarly for all
the $\Z/n$ images of the pair $J,J^\prime$. Again, we then have two morphism from 
$$(TJH^1_n)_+\wedge_{\Phi^1_n}TH^{THC(R)}_n$$
to $THHC^0_n(R)$ given by the forgetful map and by applying $\phi^1_n$ in the first coordinate
and \rref{e1}, \rref{e4} in the second. We denote by $THHC_n(R)$ the coequalizer of these
morphisms.

By construction, then, we thus obtain a genuine $\Z/n$-equivariant associative $S$-algebra $THHC_n(R)$
which
acts $\Z/n$-equivariantly on 
$$THH_n(R).$$ 
Again, this data is compatible under restriction, thus 
creating a genuine $S^1$-equivariant $S$-algebra $THHC(R)_{S^1}$ acting on $THH(R)_{S^1}$.
This completes our proof of the last statement of Theorem \ref{t1}.

\vspace{5mm}

\section{Some recollections on the Steenrod algebra}\label{scal}

In this section, we will recall some facts about cohomological operations which will be needed in the next section.
We refer the reader to Kochman \cite{koch} as a general reference. First of all, we recall that on
$$A^*_p=F(H\Z/p,H\Z/p)_*,$$
multiplication by the Bockstein $Q_0$ from the left (or the right) is exact in the sense that its kernel is equal
to its image. These two maps are induced by maps
$$Q_0^L,Q_0^R:F(H\Z/p,H\Z/p)\r\Sigma F(H\Z/p,H\Z/p),$$
given by applying the Bockstein map in the first resp. the second coordinate. Thus, the Bockstein spectral sequence
for $F(H\Z,H\Z/p)_*$ collapses to $E_2=0$, and we have
$$0=p:F(H\Z,H\Z/p)\r  F(H\Z,H\Z/p),$$
since it is $0$ on coefficients and both the source and the target are generalized Eilenberg-MacLane spectra. The 
cofibration sequence
$$\diagram H\Z\rto^p &H\Z\rto& H\Z/p\enddiagram$$
then gives a splitting
$$F(H\Z/p,H\Z/p)=F(H\Z,H\Z/p)\vee \Sigma^{-1}F(H\Z,H\Z/p).$$
Now the $E_1$ of the Bockstein spectral sequence from $F(H\Z,H\Z/p)_*$ to $F(H\Z,H\Z)_*$ 
is a Koszul complex (by Milnor's relation \cite{milnor}, Theorem 4a). 
On coefficients, the image of the Bockstein on the second
coordinate of $F(H\Z,H\Z/p)$ is therefore equal to the coefficients of some generalized Eilenberg-MacLane spectrum, 
which we will denote by $P$. We therefore conclude that
\beg{est1}{F(H\Z,H\Z)=H\Z\vee P.}
We may now work backward, studying the effect of $p^k$ on either coordinate, finding for example that
\beg{est2}{F(H\Z,H\Z/p^k)=H\Z/p^k\vee P\vee \Sigma P.}
Therefore, $p^k$ is $0$ on \rref{est2}, and we obtain, in general,
\beg{est3}{F(H\Z/p^k,H\Z/p^k)=F(H\Z,H\Z/p^k)\vee \Sigma^{-1}F(H\Z,H\Z/p^k),
}
where the splitting is induced by the cofibration sequence
$$\diagram H\Z\rto^{p^k} &H\Z\rto& H\Z/p^k\enddiagram$$
in the first coordinate. Symmetric statements also hold in the other coordinate.

\begin{lemma}\label{lst}
The fiber of the morphism 
\beg{est5}{F(H\Z/p^k,H\Z/p^k)\r \Sigma F(H\Z/p^\ell,H\Z/p^\ell)}
given by the difference of the Bockstein maps in both summands \rref{est3} is
canonically (in the stable homotopy category) equivalent to
$$F(H\Z/p^{k+\ell}, H\Z/p^{k+\ell}).$$
\end{lemma}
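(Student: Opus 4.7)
The strategy is to exploit the splitting \rref{est3} at $k$, $\ell$, and $k+\ell$, thereby reducing the fiber computation to a summand-wise calculation, and then to identify each summand-wise map with the connecting map of the Moore-spectrum cofibration
$$H\Z/p^\ell \xrightarrow{p^k} H\Z/p^{k+\ell} \to H\Z/p^k \xrightarrow{\beta_{k,\ell}} \Sigma H\Z/p^\ell$$
applied in the second coordinate.

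First, by \rref{est3} applied at each of $k$, $\ell$, $k+\ell$, the source $F(H\Z/p^k,H\Z/p^k)$, the target $\Sigma F(H\Z/p^\ell,H\Z/p^\ell)=\Sigma F(H\Z,H\Z/p^\ell)\vee F(H\Z,H\Z/p^\ell)$, and the desired fiber $F(H\Z/p^{k+\ell},H\Z/p^{k+\ell})$ all decompose as wedges of two pieces of the form $F(H\Z,H\Z/p^n)$ or $\Sigma^{-1}F(H\Z,H\Z/p^n)$. I would then unwind the phrase \emph{``difference of the Bockstein maps in both summands''} to see that, with respect to these splittings, the map \rref{est5} is diagonal, with both diagonal entries induced by $(\beta_{k,\ell})_*$ in the second coordinate: on the summand $F(H\Z,H\Z/p^k)$ it gives $(\beta_{k,\ell})_*\colon F(H\Z,H\Z/p^k)\to\Sigma F(H\Z,H\Z/p^\ell)$, and on the desuspended summand $\Sigma^{-1}F(H\Z,H\Z/p^k)$ it gives the corresponding desuspended map into $F(H\Z,H\Z/p^\ell)$.

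Next, applying $F(H\Z,-)$ to the Moore cofibration above yields the fiber sequence
$$F(H\Z,H\Z/p^\ell)\to F(H\Z,H\Z/p^{k+\ell})\to F(H\Z,H\Z/p^k)\xrightarrow{(\beta_{k,\ell})_*}\Sigma F(H\Z,H\Z/p^\ell),$$
so the fiber of each diagonal summand is $F(H\Z,H\Z/p^{k+\ell})$ or its desuspension. Wedging the two fibers and re-applying \rref{est3} at $k+\ell$ identifies the fiber of \rref{est5} canonically with $F(H\Z/p^{k+\ell},H\Z/p^{k+\ell})$; the canonicity in the stable homotopy category is automatic since every identification uses only canonical (co)fiber sequences.

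The main obstacle is the middle step: verifying that \rref{est5} really is diagonal with respect to the wedge decompositions from \rref{est3}. The ``difference'' phrasing in the statement reflects a specific sign choice that must be tracked to ensure that any potential off-diagonal contributions, arising from the interaction between first- and second-coordinate Bocksteins and the two splittings, actually cancel; once this bookkeeping is done, the summand-wise Moore-cofibration argument above produces the fiber immediately.
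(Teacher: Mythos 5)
Your high-level plan---split via \rref{est3} at $k$, $\ell$, and $k+\ell$, then reduce to the Moore-spectrum cofibration $H\Z/p^\ell\to H\Z/p^{k+\ell}\to H\Z/p^k$ applied in the second coordinate---captures the spirit of what the paper calls ``a direct consequence of \rref{est3},'' and the summand-wise identification of the fiber of $F(H\Z,H\Z/p^k)\to\Sigma F(H\Z,H\Z/p^\ell)$ with $F(H\Z,H\Z/p^{k+\ell})$ is exactly the right input. However, the diagonality claim, which you yourself flag as the main obstacle, is not justified by asserting that off-diagonal contributions cancel, and the paper's coefficient-level analysis around \rref{est6} shows that they do not. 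That analysis exhibits the two Bocksteins as having components landing in \emph{different} target summands: one sends $P^*[1]$ isomorphically to $P^*[1]_0$ while the other sends it isomorphically to $P^*[1]_{-1}$, and one sends $P^*_0$ into $P^*_{-1}$ while the other sends $P^*_{-1}$ isomorphically to $P^*_{-1}$. In the difference $B_1-B_2$, these contributions lie in distinct wedge summands of the target, so there is no cancellation, and the resulting map is genuinely non-diagonal with respect to the splittings of \rref{est3}.

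The paper therefore does not compute the fiber summand by summand. Instead it carries out a direct accounting of the kernel and cokernel (``the surviving terms give the answer, with the surviving target terms desuspended by $1$''), and in particular identifies a nontrivial extension between the copies of $\Z/p^k$ in the source and $\Z/p^\ell$ in the cokernel that assembles into $\Z/p^{k+\ell}$; this extension is the real content behind the appearance of $F(H\Z/p^{k+\ell},H\Z/p^{k+\ell})$. This is where your argument has a gap: asserting cancellation skips the step that actually produces the answer. To repair it, you would either need to do the coefficient bookkeeping the paper does, or else exhibit explicit automorphisms of $A_k\vee\Sigma^{-1}A_k$ and $\Sigma A_\ell\vee A_\ell$ (where $A_n:=F(H\Z,H\Z/p^n)$) that reduce the matrix of $B_1-B_2$ to a triangular form from which the fiber is visibly $A_{k+\ell}\vee\Sigma^{-1}A_{k+\ell}$. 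The final answer you reach is correct, and the Moore-cofibration mechanism is the right one, but the middle step as written does not stand.
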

\begin{proof}
A direct consequence of \rref{est3}
\end{proof}

It is also useful to realize directly what happens on coefficients in terms of
\rref{est2}. We may write \rref{est5} on coefficients as
\beg{est6}{\diagram
\Z/p^k_0\oplus P^*_0\oplus P^*[1]_0\oplus \Z/p^k[-1]_{-1}\oplus P^*[-1]_{-1}\oplus P^*_{-1}\dto\\
\Z/p^\ell[1]_0\oplus P^*[1]_0\oplus P^*[2]_0\oplus \Z/p^\ell_{-1}\oplus P^*_{-1}\oplus P^*[1]_{-1}
\enddiagram
}
(Here the subscript $0$ resp. $-1$ denotes a part of the coefficients of the term \rref{est2} 
which is unsuspended resp. suspended by $-1$.)

Now one of the Bocksteins sends $P^*[1]$ isomorphically to $P^*[1]_0$, while the other sends it
isomorphically to $P^*[1]_{-1}$. On the other hand, one of the Bocksteins sends $P^*_0$ to $P^*_{-1}$,
while the other sends $P^*_{-1}$ isomorphically to $P^*_{-1}$. The surviving terms give the answer, 
with the surviving target terms desuspended by $1$. Extensions are present between the copies of
$\Z/p^k$ and $\Z/p^\ell$, due to the definition of the Bockstein. Bookkeeping completes the result.

\vspace{3mm}

Also, using the splitting \rref{est3}, we obtain a canonical (up to homotopy) map
$$ F(H\Z/p^k,H\Z/p^k)\r F(H\Z/p^{k-1},H\Z/p^{k-1})
$$ 
and we have
\beg{est10}{\begin{array}{l}\displaystyle
\operatornamewithlimits{holim}_{k}(\dots\r F(H\Z/p^k,H\Z/p^k)\r F(H\Z/p^{k-1},H\Z/p^{k-1})\r\dots)\\[3ex]
=F(H\Z,H\Z)^\wedge_p\vee \Sigma^{-1} F(H\Z,H\Z)^\wedge_p.
\end{array}}

\vspace{5mm}

\section{A recollection of $THH(H\F_p)$}\label{scalc1}

We begin with recalling the calculation of $THH(H\F_p)^{\Z/p^{r-1}}$ of \cite{hm}, 4.2 - Theorem 4.5. One has
\beg{ethh1}{THH(H\F_p)=B_{H\F_p}(H\F_p,HF_p\wedge H\F_p,H\F_p),
}
(where the bar construction is in the category of $H\F_p$-algebras and everywhere we
assume cofibrant models). The coefficients of \rref{ethh1} can be calculated 
via the Eilenberg-MacLane spectral sequence which collapses to $E^2$ for $p=2$
and has a Kudo differential for $p>2$. In both cases, we can conclude that
\beg{ethh2}{THH(H\F_p)_*=\Z/p[\sigma]
}
where $\sigma$ is in homological degree $2$. Now 
the Tate spectral sequence for $THH(H\F_p)$ is
\beg{ethh3}{\Z/p[\sigma][x,x^{-1}]\otimes\Lambda_{\F_p}(u)\Rightarrow \widehat{THH(H\F_p)}^{\Z/p^{r-1}}
}
where $x$ is the Tate periodicity element of homological degree $-2$ and $u$ has homological degree $-1$.
Then \cite{hm}, 4.3-4.4 prove that $x$ is a permanent cycle, while
\beg{ethh4}{d^{2r-1}u=x^{r}\sigma^{r-1}.
}
Furthermore, there is a multiplicative extension
\beg{ethh5}{p=x\sigma,
}
therefore giving
\beg{ethh6}{\widehat{THH(H\F_p)}^{\Z/p^{r-1}}_*=\Z/p^{r-1}[x,x^{-1}].
}
The $\Z/p^{r-1}$-Borel cohomology spectral sequence 
for $THH(H\F_p)$ results from taking the part of \rref{ethh3} 
with non-negative powers
of $x$ (and all the differentials contained entirely in that part), while the 
$\Z/p^{r-1}$-Borel homology spectral sequence
for $THH(H\F_p)$ results from taking the part of \rref{ethh3} 
with negative powers of $x$, shifted by $-1$ (graded homologically).

One gets:
\beg{ethh7}{\begin{array}{l}\displaystyle
(E\Z/p^{r-1}_+\wedge THH(H\F_p))^{\Z/p^{r-1}}_{2i}=\Z/p^{\min(i+1,r)}\\[1ex]
\displaystyle
(E\Z/p^{r-1}_+\wedge THH(H\F_p))^{\Z/p^{r-1}}_{2i+1}=\Z/p^{\min(i+1,r-1)}
\end{array}
}
for $i\geq 0$ (it is $0$ for $i<0$).

The calculation of $THH(H\F_p)^{\Z/p^{r-1}}$ is then completed by induction: For any $\Z/p^{r-1}$-equivariant
spectrum $E$, we have a cofibration sequence
\beg{ethh8}{
(E\Z/p^{r-1}_+\wedge E)^{\Z/p^{r-1}}\r E^{\Z/p^{r-1}}\r (\Phi^{\Z/p}E)^{\Z/p^{r-2}}.
}
Since $THH(H\F_p)$ is a cyclotomic spectrum, its coefficients are the coefficients of the homotopy fiber
of the connecting map
\beg{ethh9}{THH(H\F_p)^{\Z/p^{r-2}}\r \Sigma (E\Z/p^{r-1}_+\wedge THH(H\F_p))^{\Z/p^{r-1}}.
}
We know the target by \rref{ethh7}. The induction gives
\beg{ethh10}{
THH(H\F_p)^{\Z/p^{r-1}}_*=\Z/p^r[y]
}
where $y$ has homological degree $2$. Assuming this inductively with $r$ replaced by $r-1$,
the connecting map \rref{ethh9} on coefficients
(which decreases homological degree by $1$) is onto in odd degrees, and the even terms have an 
extension, creating the answer \rref{ethh10} additively. The multiplicative answer then also
follows inductively from the fact that the second map \rref{ethh8}
is a ring map when $E$ is a ring spectrum.

We also recall the fact that the map $R$ (which is defined in \cite{hm}
by composing the second map \rref{ethh8}
with the cyclotomic structure map) sends $y$ to $py$, while the map $F$ (defined as the forgetful map)
sends $y$ to $y$. This implies that, letting $TR$ be the microscope of the map $R$, we have
\beg{ethh11}{
TR(H\F_p)=H\Z_p.
}
All the spectra discussed in the process of the calculation are module spectra over \rref{ethh11},
and thus are generalized Eilenberg-MacLane spectra.

\vspace{5mm}

\section{Calculation of $TR(THC(H\F_p))$} \label{str}

We now combine the material of the last two sections to calculate 
the coefficients of $THH(THC(H\F_p))^{\Z/p^{r-1}}$,
and thereby prove Theorem \ref{t2}.
First of all, we can write
\beg{estr0}{THC(H\F_p)=Cobar_{H\F_p}(H\F_p,H\F_p\wedge H\F_p,H\F_p),}
so the coefficients are indeed dual to \rref{ethh1}. Here we write
$$\begin{array}{l}Cobar_{H\F_p}(H\F_p,H\F_p\wedge H\F_p,H\F_p)=C_{H\F_p}(H\F_p\wedge H\F_p)=\\
F_{H\F_p\wedge H\F_p}(B_{H\F_p}(H\F_p\wedge H\F_p,H\F_p\wedge H\F_p,H\F_p),H\F_p).
\end{array}
$$
In fact, there is a Hopf algebra structure, which implies
that we can write
\beg{estr1}{THC(H\F_p)_*=\Gamma_{\F_p}(\rho)
}
where $\Gamma$ denotes the divided polynomial power algebra
and the homological degree of the element $\rho$ is $-2$. Now
by \rref{estr0}, $THC(H\F_p)$ is an $H\F_p$-algebra, and (implicitly
assuming cofibrant replacements in every term), we can therefore, non-equivariantly,
write
\beg{estr2}{\begin{array}{l}
THH(THC(H\F_p))=\\
B_{H\F_p}(THC(H\F_p), THC(H\F_p)\wedge THC(H\F_p),THC(H\F_p)),
\end{array}
}
with the subscript indicating that the bar construction is performed in the category of $H\F_p$-modules. 
(To clarify the
$H\F_p$-module structures on the right hand side of
\rref{estr2}, note that there is a canonical $H\F_p$-module structure on $THC(H\F_p)$. On the
term $THC(H\F_p)\wedge THC(H\F_p)$, we can use the $H\F_p$-module structure from either factor.
To fix ideas, we will use the first factor.)

This is just an example of a general fact that, for a morphisms of $E_\infty$-algebras $A\r B$ and an
associative algebra $C$ over $B$ with a left $C$-module $N$ and a right $C$-module $M$, assuming 
cofibrancy throughout, we have
$$B_A(M,C,N)\sim B_B(M,C,N).$$
This is simply due to the fact that both 
$$B_A(C,C,N),\; B_B(C,C,N)$$
are $C$-cofibrant models of the left $C$-module $N$, to which we are applying the 
functor $M\wedge_C?$. This principle is often used in spectral as well as classical algebra,
including the correct calculation of $THH(\F_p)$ via \rref{ethh1}.

This means 
we have again an Eilenberg-MacLane spectral sequence. In fact, we have a further filtration on \rref{estr2},
in the category of $H\F_p$-algebras, (coming from the augmentation ideal of the first factor of the 
middle term)
with associated graded object
\beg{estr3}{\begin{array}{l}
THC(H\F_p)\wedge_{H\F_p}B_{H\F_p}(H\F_p, H\F_p\wedge THC(H\F_p),H\F_p)=\\
THC(H\F_p)\wedge_{H\F_p}THH(H\F_p)\wedge_{H\F_p}B_{H\F_p}(C_{H\F_p}(H\F_p\wedge H\F_p))=\\
THC(H\F_p)\wedge_{H\F_p}THH(H\F_p)\wedge_{H\F_p}F(H\F_p,H\F_p)
\end{array}
}
(by Koszul duality). The coefficients of \rref{estr3} are
\beg{estr4}{
\F_p[\sigma]\otimes \Gamma_{\F_p}(\rho)\otimes A^*
}
where $A^*=F(H\Z/p,H\Z/p)_*$. On the other hand, by Theorem \ref{t1}, we have a map
\beg{estr5}{
THH(THC(H\F_p))\r F(THH(H\F_p),THH(H\F_p)).
}
(This is, in fact, even true $\Z/p^{r-1}$-equivariantly.) Non-equivariantly, however, all the elements
\rref{estr4} exist and are non-zero in the target of \rref{estr5}, and thus cannot support differentials, 
Therefore, we have proved that
\beg{estr6}{
THH(THC(H\F_p))_*=\F_p[\sigma]\otimes \Gamma_{\F_p}(\rho)\otimes A^*.
}
From this point on, the strategy for computing $THH(THC(H\F_p))^{\Z/p^{r-1}}_*$ mimics the strategy
for $THH(H\F_p)_*$, described in Section \ref{scalc1}. We begin by calculating 
the coefficients of the Borel homology spectrum
\beg{estr7}{
E\Z/p^{r-1}_+\wedge THH(THC(H\F_p))
}
via the Borel homology spectral sequence. In contrast with the Borel homology of $THH(H\F_p)$,
the differentials are somewhat different. First, we may filter the $E^2$-term
\beg{estr8}{
\F_p[\sigma]\otimes \Gamma_{\F_p}(\rho)\otimes A^*\{e_0,e_1,\dots\}
}
by powers of $\sigma$ (where $e_i$ is the generator of $H_i(\Z/p,\Z/p)$). This filtration is, in fact, also
realized on the spectral level. Now the associated graded object is a polynomial algebra in one
variable $\sigma$ over
\beg{estr9}{
\Gamma_{\F_p}(\rho)\otimes A^*\{e_0,e_1,\dots\}.
}
On \rref{estr9}, however, there is a $d^2$-differential given by
\beg{estr10}{
d^2(e_i)=Q_0^Re_{i-2}\overline{\sigma}
}
(where $\overline{\sigma}$ acts on $\Gamma_{\F_p}(\rho)$ as the dual variable $\sigma$ does
using the Hopf algebra structure on $THC(H\F_p)_*$, i.e by $\overline{\sigma}:\gamma_i(\rho)\mapsto
\gamma_{i-1}(\rho)$). This differential in fact comes from the extension
\rref{ethh5}:

\vspace{4mm}

\begin{sseqpage}[ymirror, no y ticks, y range = {0}{4}, x range = {0}{4},
  classes={draw = none}, xscale=2.2, y axis gap = 3em, x axis extend end = 2em]
  \begin{scope}[ background ]
\node at (-0.7,0) {0};
\foreach \n in {1,..., \ymax}{
\node at (-0.7,\n) {-\n};
}
\end{scope}
  \class["A^*e_0",red](0,0) \class["A^*e_1",red](1,0)
  \class["A^*e_2",red](2,0) \class["A^*e_3",red](3,0)
  \class["A^{*}e_4", red](4,0)
  \class["A^*\rho\cdot e_0",blue](0,2) \class["A^*\rho\cdot e_1",blue](1,2)
  \class["A^*\rho\cdot e_2"](2,2) \class["A^*\rho\cdot e_3"](3,2)
   \class["A^*\rho\cdot e_4"](4,2)
  \class["A^*\gamma_2\rho\cdot e_0",blue](0,4)
  \class["A^*\gamma_2\rho\cdot e_1",blue](1,4)
  \class["A^*\gamma_2\rho\cdot e_2"](2,4)
  \class["A^*\gamma_2\rho\cdot e_3"](3,4)
  \class["A^*\gamma_2\rho\cdot e_4"](4,4)
  \d["\cdot Q_0^R" { pos = 0.3}] 2(2,2)(0,0)
  \d["\cdot Q_0^R" { pos = 0.3}] 2(3,2)(1,0)
  \d["\cdot Q_0^R" { pos = 0.3}] 2(4,2)(2,0)
  \d["\cdot Q_0^R" { pos = 0.3}] 2(2,4)(0,2)
  \d["\cdot Q_0^R" { pos = 0.3}] 2(3,4)(1,2)
  \d["\cdot Q_0^R" { pos = 0.3}] 2(4,4)(2,2)
\end{sseqpage}

\vspace{4mm}

By the observations of Section \ref{scal}, however,
$Q_0^R$ is in fact exact on the Steenrod algebra. This means that the $E^3$-term of the slice \rref{estr9}
is a sum of
\beg{estr11}{
(\Gamma_{\F_p}(\rho))_{<0}\otimes A^*/Q_0^R\{e_0,e_1\}
}
where $(\Gamma_{\F_p}(\rho))_{<0}$ denotes the augmentation ideal of $\Gamma_{\F_p}(\rho)$
and
\beg{estr12}{
A^*/Q_0^R\{e_0,e_1,\dots\}.
}
When we put the slices together, we get a polynomial algebra on one generator $\sigma$ 
tensored with \rref{estr11} and \rref{estr12}. On 
\beg{estr20}{
A^*/Q_0^R\{e_0,e_1,\dots\}[\sigma],
}
we then get another component
of the $d^2$-differential 
\beg{estr21}{
d^2(e_i)=Q_0^Le_{i-2}\sigma.
}

\vspace{4mm}

\begin{center}

\begin{sseqpage}
  \class(0,0) \class(1,0) \class(2,0) \class(3,0) \class(4,0)
  \class["\sigma" above](0,2) \class(1,2) \class(2,2) \class(3,2) \class(4,2)
  \class(0,4) \class(1,4) \class(2,4) \class(3,4) \class(4,4)
  \d2(2,0)(0,2)
  \d2(3,0)(1,2)
  \d2(4,0)(2,2)
  \d2(2,2)(0,4)
  \d2(3,2)(1,4)
  \d["\cdot Q_0^L" { pos = 0.3, yshift = 5pt }]2(4,2)(2,4)
\end{sseqpage}

\end{center}

\vspace{4mm}

By the observations of Section \ref{scal}, $Q_0^L$ on $A^*/Q_0^R$ is a Koszul complex, with image $P_*$ and
cokernel $\Z/p\oplus P_*[1]$. Therefore, the part of the $E^3$-term on \rref{estr21} is a sum of
\beg{estr22}{
P_*\oplus P_*[1]\{e_0,e_1,\dots\}
}
and
\beg{estr23}{\Z/p[\sigma]\{e_0,e_1,\dots\}.
}
On \rref{estr23}, we then have the differential \rref{ethh4}. 

In summary, we obtain 

\begin{lemma}\label{lstr}
The coefficients of \rref{estr7} are a direct sum of \rref{estr11}, \rref{estr22}, and \rref{ethh7}.
\end{lemma}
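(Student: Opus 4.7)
My plan is to compute $(E\Z/p^{r-1}_+\wedge THH(THC(H\F_p)))^{\Z/p^{r-1}}_*$ by running the Borel homology spectral sequence out of the coefficients \rref{estr6}, whose $E^2$-page is \rref{estr8}. The overall strategy parallels the computation for $THH(H\F_p)$ reviewed in Section \ref{scalc1}, but one must track the extra interaction with the $\Gamma_{\F_p}(\rho)$ factor which was absent in the classical case.

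First, I would introduce a secondary filtration of $E^2$ by powers of $\sigma$, realized spectrally as in the $THH(H\F_p)$ story, so that the associated graded is a polynomial algebra in $\sigma$ tensored with \rref{estr9}. On each slice, I expect a $d^2$-differential of the form $d^2(e_i) = Q_0^R e_{i-2}\overline{\sigma}$, where $\overline{\sigma}$ is the derivation on $\Gamma_{\F_p}(\rho)$ dual to multiplication by $\sigma$ under the Hopf pairing coming from Koszul duality. This differential is the avatar in our setting of the extension \rref{ethh5}, now transported across the $THC$ factor. Using the exactness of $Q_0^R$ on $A^*$ recalled in Section \ref{scal}, the resulting $E^3$ on this slice splits into \rref{estr11} and \rref{estr12}.

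Next, I would reassemble the slices into an $\F_p[\sigma]$-module and identify a further $d^2$-component on \rref{estr20}, which I expect to take the form $d^2(e_i) = Q_0^L e_{i-2}\sigma$, i.e., the left-handed analogue of the first differential, arising because $\sigma$ now multiplies across slices rather than internally. The Koszul description of Section \ref{scal} identifies $Q_0^L$ on $A^*/Q_0^R$ as having image $P_*$ and cokernel $\Z/p\oplus P_*[1]$, so this $d^2$ collapses the \rref{estr12}-part into the sum of \rref{estr22} with a residual $\F_p[\sigma]\{e_0,e_1,\dots\}$ given by \rref{estr23}. On \rref{estr23}, the Kudo-type differential \rref{ethh4} applies exactly as in the pure $THH(H\F_p)$ case, via the natural comparison map, and produces \rref{ethh7}. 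Summing the surviving pieces yields the claimed direct sum decomposition.

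The main obstacle is justifying the two $d^2$-differentials, especially the first one involving $\overline{\sigma}$ acting on divided powers. The non-equivariant comparison map \rref{estr5} is what I would lean on: every element of \rref{estr6} is detected in $F(THH(H\F_p),THH(H\F_p))$, so no class can support a differential inconsistent with its effect on $THH(H\F_p)$, where the multiplicative extension $p = x\sigma$ already forces a $Q_0$-type action in the Borel filtration. Carefully disentangling the left versus right actions of $Q_0$ on the two $F(H\Z/p,H\Z/p)$-factors is where the bookkeeping lives; once this is set up correctly, the exactness statements of Section \ref{scal} do the algebraic work and collapse the spectral sequence to the stated answer.
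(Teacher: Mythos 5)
Your differential analysis matches the paper's exactly: the secondary $\sigma$-filtration, the $d^2 = Q_0^R \cdot \overline{\sigma}$ on the slices (seeded by the extension \rref{ethh5}), the further $d^2 = Q_0^L \cdot \sigma$ after reassembly, and the residual \rref{estr23} carrying the differential \rref{ethh4} that yields \rref{ethh7}. The exactness and Koszul-complex inputs from Section \ref{scal} are used in the same way, so at the level of the $E^\infty$-page your computation and the paper's coincide.

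The gap is at the final step: you write "summing the surviving pieces yields the claimed direct sum decomposition," but the $E^\infty$-page of the spectral sequence only gives the \emph{associated graded}, and the lemma asserts an actual direct sum of the abutment into \rref{estr11}, \rref{estr22}, and \rref{ethh7}. Since \rref{ethh7} carries nontrivial (additive and multiplicative) extensions — its groups are $\Z/p^{\min(i+1,r)}$ rather than $\F_p$-vector spaces — one must argue that all extensions are confined to the \rref{estr23}-piece and that there are no extensions linking \rref{estr11} or \rref{estr22} to anything else. The paper handles this explicitly: by degree restrictions and by the $THH_{\Z/p^{r-1}}(\F_p)$-module structure, only the \rref{ethh7} terms admit extensions, and those extensions were already identified in Section \ref{scalc1}. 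Without this step the direct-sum claim is not established. (A smaller slip: \rref{ethh4} is the long Tate differential $d^{2r-1}u = x^r\sigma^{r-1}$, not a Kudo-type differential; Kudo's transgression theorem enters only in the Eilenberg--MacLane computation of \rref{ethh2} for odd $p$.)
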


\begin{proof}
The terms \rref{ethh7} come from \rref{estr23}. The other terms were already explained. The
terms \rref{ethh7} are the only ones which allow multiplicative extension, by degree restctions
and module structure over $THH_{\Z/p^{r-1}}(\F_p)$. The extensions in the terms \rref{ethh7}
were already disussed.
\end{proof}

Next, we use \rref{ethh8} to obtain

\begin{proposition}\label{pstr}
We have
\beg{estr30}{\begin{array}{l}
THH(THC(H\F_p))^{\Z/p^{r-1}}_*=
F(H\Z/p^r,H\Z/p^r)_*[y]\otimes \Gamma_{\Z/p^r}(\rho).
\end{array}
}
Additionally, the map $R$ of \cite{hm} sends $y$ to $py$ and $\gamma_i\rho$ to $\gamma_i\rho$.
\end{proposition}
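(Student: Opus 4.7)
The strategy is to mimic the inductive procedure used in Section \ref{scalc1} for computing $THH(H\F_p)^{\Z/p^{r-1}}_*$, combining Lemma \ref{lstr} with the cofibration sequence \rref{ethh8} and the cyclotomic structure of $THH$. I would induct on $r$, with the base case $r = 1$ following from \rref{estr6} upon identifying $\sigma$ with $y$ and observing that $A^* = F(H\Z/p, H\Z/p)_*$.

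For the inductive step, since $THH$ of any associative $S$-algebra is cyclotomic, we have $\Phi^{\Z/p} THH(THC(H\F_p)) \simeq THH(THC(H\F_p))$, so by \rref{ethh8} the coefficients of $THH(THC(H\F_p))^{\Z/p^{r-1}}$ are the coefficients of the fiber of a connecting map
\[ THH(THC(H\F_p))^{\Z/p^{r-2}} \longrightarrow \Sigma (E\Z/p^{r-1}_+ \wedge THH(THC(H\F_p)))^{\Z/p^{r-1}}, \]
whose source is known by induction and whose target is described by Lemma \ref{lstr} as the direct sum of \rref{estr11}, \rref{estr22}, and \rref{ethh7}.

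The main computational task is to match the terms. The summand \rref{ethh7} contributes the polynomial piece $\Z/p^r[y]$ by exactly the same argument as in Section \ref{scalc1} (the connecting map is onto in odd degrees, and the even degree extension upgrades $\Z/p^{r-1}[y]$ to $\Z/p^r[y]$). The terms \rref{estr11} and \rref{estr22}, indexed by the divided power basis $\{\gamma_i(\rho)\}_{i \geq 0}$, are built from $A^*/Q_0^R$ and $P_*$ summands; by Lemma \ref{lst} they are exactly the ingredients that fit into a Bockstein-difference fiber sequence assembling $F(H\Z/p^{r-1}, H\Z/p^{r-1})_*$ with $F(H\Z/p, H\Z/p)_*/Q_0$-type pieces into $F(H\Z/p^r, H\Z/p^r)_*$. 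Reading off the fiber slice-by-slice over $\Gamma(\rho)$ gives the answer \rref{estr30} additively. The multiplicative structure then follows from the ring map property of the second map in \rref{ethh8}, together with the ring map \rref{estr5} of Theorem \ref{t1} (used non-equivariantly to pin down the products between $y$, $\sigma$-factors and divided powers of $\rho$), forcing compatibility with the $\Z/p^r$-coefficient structure of $F(H\Z/p^r, H\Z/p^r)_*[y] \otimes \Gamma_{\Z/p^r}(\rho)$.

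For the statement about $R$: the identity $R(y) = py$ transfers directly from the $THH(H\F_p)$ calculation in Section \ref{scalc1}, since $y$ arises from the same class $\sigma$ via the filtration \rref{estr3}. That $R$ fixes each $\gamma_i(\rho)$ follows because these classes come from $THC(H\F_p)_*$, which lies in the image of the non-equivariant unit and is thus preserved by the cyclotomic restriction. The hardest step, I expect, will be the consistent tracking of Bockstein extensions \emph{across} the divided power algebra: although each $\gamma_i(\rho)$-slice independently assembles as Lemma \ref{lst} predicts, verifying that these pieces fit together globally into $\Gamma_{\Z/p^r}(\rho)$-multiples of $F(H\Z/p^r, H\Z/p^r)_*$ (rather than merely matching additively) requires a careful compatibility between the Hopf algebra structure on $THC(H\F_p)_*$, the $\overline{\sigma}$-action governing \rref{estr10}, and the cyclotomic structure maps.
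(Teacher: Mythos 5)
Your proposal follows essentially the same route as the paper's proof: induction on $r$ via the cofibration sequence \rref{ethh8} and the cyclotomic structure, analyzing the connecting map \rref{estr31} whose source is known inductively and whose target is given by Lemma \ref{lstr}, then reassembling the pieces using Lemma \ref{lst} (with $k=r-1$, $\ell=1$) together with the Bockstein splittings \rref{est2}--\rref{est3}, and finishing with multiplicative considerations from the ring-map structure. Your added sketch for the behavior of the map $R$ (which the paper's own proof leaves unaddressed even though it appears in the statement) is a reasonable account of that part, with $R(y)=py$ descending from the $THH(H\F_p)$ calculation and $R$ fixing each $\gamma_i\rho$ because those classes arise from the underlying non-equivariant $THC(H\F_p)$ through the unit and are therefore preserved by the cyclotomic restriction.
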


\begin{proof}
Analogously to \rref{ethh9}, the coefficients of $THH(THC(H\F_p))^{\Z/p^{r-1}}$ are
the coefficients of the homotopy fiber of the morphism of spectra
\beg{estr31}{\diagram
THH(THC(H\F_p))^{\Z/p^{r-2}}\dto\\ \Sigma (E\Z/p^{r-1}_+\wedge THH(
THC(H\F_p)))^{\Z/p^{r-1}}.
\enddiagram}
The target is computed in Lemma \ref{lstr}. Thus, we proceed again by induction on 
$r$. Assuming the statement is true with $r$ replaced by $r-1$, we have an inductive calculation 
of the source of \rref{estr31}. Thus, we need to compute the connecting map. To this end, 
use the computations \rref{est2}, \rref{est3}.
Every copy of $P[1]$ in the source is isomorphically
mapped to a copy of $P$ in the target. At the augmentation ideal of $\Gamma(\rho)$, in fact, we
have a sum of copies of the exension of Lemma \ref{lst} with $k=r-1$, $\ell=1$. On the second component
$F(H\Z,H\Z/p^{r-1})_*[y][-1]$ from
\rref{est3} of the
\beg{estr32}{F(H\Z/p^{r-1},H\Z/p^{r-1})_*[y]} 
in the source of \rref{estr31}, we also have the standard 
Bockstein extension. On the $\Z/p^{r-1}$ part of the first component 
$F(H\Z,H\Z/p^{r-1})_*[y]$ from
\rref{est3} of \rref{estr32}, we have, in fact, the same map and extension 
as in \rref{ethh9}. Multiplicative considerations (which follow from the behavior of the unit) 
complete the proof.
\end{proof}

\begin{proof}[Proof of Theorem \ref{t2}]
Apply Proposition \ref{pstr}, and take the limit \rref{est10}. On the limit step, no $\lim^1$
occurs due to the Mittag-Leffler condition being in effect.
\end{proof}

\vspace{10mm}

\end{document}